\renewenvironment{proof}[1][Proof]{\textbf{#1.} }
{\ \rule{0.5em}{0.5em}}
\DeclareMathOperator{\ad}{ad}
\DeclareMathOperator{\Ad}{Ad}
\DeclareMathOperator{\diag}{diag}
\DeclareMathOperator{\trace}{trace}
\DeclareMathOperator{\Id}{Id}
\DeclareMathOperator{\End}{End}
\DeclareMathOperator{\Aut}{Aut}
\renewcommand{\arraystretch}{1.5}
\newtheorem{theorem}{Theorem}
\newtheorem{pred}{Proposition}
\newtheorem{lem}{Lemma}
\newtheorem{cor}{Corollary}
\newtheorem{remark}{Remark}
\begin{document}

\title[Classification of generalized Wallach spaces]
{Classification of generalized Wallach spaces}

\author{Yu.\,G.~Nikonorov}
\address{Yu.\,G. Nikonorov \newline
South Mathematical Institute of  Vladikavkaz Scientific Centre \newline
of the Russian Academy of Sciences, Vladikavkaz, Markus st. 22, \newline
362027, Russia}
\email{nikonorov2006@mail.ru}


\begin{abstract} In this paper, we present the classification of generalized  Wallach spaces and discuss
some related problems.

\vspace{2mm} \noindent Key word and phrases:
generalized Wallach space, compact homogeneous space, symmetric space,
automorphism of a Lie algebra, Killing form, Riemannian metric, Einstein metric, Ricci flow.

\vspace{2mm}

\noindent {\it 2010 Mathematics Subject Classification:} 53C30, 53C35, 53C44, 17A36, 17B40, 14M17.
\end{abstract}

\maketitle

\section*{Introduction and the main result}

This paper is devoted to the classification of generalized Wallach spaces, a remarkable class of compact homogeneous spaces.
These spaces were introduced in the paper \cite{Nikonorov2}, where they were called three-symmetric spaces.
Now we prefer to call them {\it generalized Wallach spaces} as in \cite{Nikonorov1}, because this term is less confusing and more informative.
We begin with recalling some notations and definitions.

Let $G/H$ be a compact homogeneous spaces with connected  compact semisimple Lie group $G$ and its compact subgroup $H$.
Denote by $\mathfrak{g}$ and $\mathfrak{h}$ Lie algebras of $G$ and $H$ respectively.
We suppose that $G/H$ is almost effective, i.~e. there is no non-trivial ideals of the Lie algebra $\mathfrak{g}$  in $\mathfrak{h} \subset\mathfrak{g}$.
Denote by $B=B(\boldsymbol{\cdot}\,,\boldsymbol{\cdot})$ the Killing form of $\mathfrak{g}$. Since $G$ is compact, $B$ is negatively defined on $\mathfrak{g}$.
Therefore, $\langle\boldsymbol{\cdot}\,,\boldsymbol{\cdot}\rangle:=-B(\boldsymbol{\cdot}\,,\boldsymbol{\cdot})$ is a positive definite inner product
on $\mathfrak{g}$.
Properties of $B$ imply that
$\langle\boldsymbol{\cdot}\,,\boldsymbol{\cdot}\rangle$ is bi-invariant, i.~e. $\langle[Z,X],Y\rangle+\langle X,[Z,Y]\rangle=0$ for all $X,Y,Z \in \mathfrak{g}$.

Let $\mathfrak{p}$ be the $\langle\boldsymbol{\cdot}\,,\boldsymbol{\cdot}\rangle$-orthogonal complement to $\mathfrak{h}$ in $\mathfrak{g}$.
It is clear that $\mathfrak{p}$ is $\Ad(H)$-invariant (and $\ad(\mathfrak{h})$-invariant, in particular).
The module $\mathfrak{p}$ is naturally identified with the tangent space to $G/H$ at the point $eH$, see e.~g. \cite[7.23]{Bes}.
Every $G$-invariant Riemannian metric on~$G/H$ generates an~$\Ad(H)$-invariant
inner product on~$\mathfrak{p}$ and vice versa \cite[7.24]{Bes}. Therefore, it is possible to
identify invariant Riemannian metrics on~$G/H$ with $\Ad(H)$-invariant inner
products on~$\mathfrak{p}$. Note that the Riemannian metric generated by the~inner product
$\langle\boldsymbol{\cdot}\,,\boldsymbol{\cdot}\rangle\bigr\vert_{\mathfrak{p}}$ is called
{\it standard} or {\it Killing}.

\begin{remark}
A linear subspace $\mathfrak{q}\subset \mathfrak{p}$ is $\ad(\mathfrak{h})$-invariant if and only if it is $\Ad(H_0)$-invariant,
where $H_0$ is the unit component of the group $H$. Hence,  these two notions are equivalent for connected~$H$.
It should be noted also, that the group $H$ is connected provided that the space $G/H$ is simply connected.
\end{remark}

\smallskip

Suppose that a homogeneous space~$G/H$ has the following property: the modules $\mathfrak{p}$
is decomposed as a direct sum of three
$\Ad(H)$-invariant irreducible modules pairwise orthogonal
with respect to~$\langle\boldsymbol{\cdot}\,,\boldsymbol{\cdot}\rangle$, i.~e.
\begin{equation}\label{decom1}
\mathfrak{p}=\mathfrak{p}_1\oplus \mathfrak{p}_2\oplus \mathfrak{p}_3,
\end{equation}
such that
\begin{equation}\label{decom2}
[\mathfrak{p}_i,\mathfrak{p}_i]\subset \mathfrak{h} \quad \mbox{ for } \quad i\in\{1,2,3\}.
\end{equation}
Homogeneous spaces with this property are called {\it generalized Wallach spaces}.

\begin{remark} The authors of  \cite{Nikonorov2, Lomshakov2} called these spaces {\it three-locally-symmetric}, since the  condition
{\rm(\ref{decom2})} resembles the~condition of local symmetry for
homogeneous spaces \big(a~locally symmetric homogeneous space~$G/H$ is
characterized by the~relation $[\mathfrak{p},\mathfrak{p}]\subset \mathfrak{h}$,
where $\mathfrak{g}=\mathfrak{h}\oplus \mathfrak{p}$
and~$\mathfrak{p}$ is $\Ad(H)$-invariant~\cite[7.70]{Bes}\big).
\end{remark}

A detailed discussion on generalized Wallach spaces could be found in \cite[pp. 6346--6347]{Nikonorov1} or~\cite{Lomshakov2},
but we recall some important properties of these spaces for the  reader's convenience.

There are many examples of these spaces, e.~g. the~manifolds of complete flags in the complex, quaternionic, and Cayley projective planes
(a complete flag in any of these planes is a pair
$(p,l)$ where $p$ is a point in the plane and $l$ a  line (complex, quaternionic or octonionic) containing the point $p$):
$$
SU(3)/T_{\max},
\ \ Sp(3)/Sp(1)\times Sp(1)\times Sp(1),
\ \ F_4/Spin(8).
$$
These spaces (known as {\it Wallach spaces}) are  also interesting in that
they admit invariant Riemannian metrics of positive sectional curvature
(see~\cite{Wal}).
The~Lie group~$SU(2)$ \big($H=\{e\}$\big) is another example
of generalized Wallach spaces.
Note also that $SO(3)/(\mathbb{Z}_2\times \mathbb{Z}_2)$ is
the~manifold of complete flags in the real projective plane.
It is interesting that the above manifolds of complete flags have representations as so-called {\it Cartan’s isoparametric submanifolds},
see e.~g. \cite{Sanchez} for details.

Other examples of generalized Wallach spaces are some K\"ahler $C$-spaces such as
$$
SU(n_1+n_2+n_3)\big/S\big(U(n_1)\times U(n_2)\times U(n_3)\big),
$$
$$
SO(2n)/U(1)\times U(n-1),
\quad
E_6/U(1)\times U(1)\times Spin(8).
$$
There are two more $3$\,-parameter families of generalized Wallach spaces:
$$
SO(n_1+n_2+n_3)\big/SO(n_1)\times SO(n_2)\times SO(n_3), \quad Sp(n_1+n_2+n_3)\big/Sp(n_1)\times Sp(n_2)\times Sp(n_3).
$$

Note, that every generalized Wallach space admits a $3$\,-parameter family of invariant Riemannian metrics
determined by $\Ad (H)$-invariant inner products
\begin{equation}
\label{metric}
(\boldsymbol{\cdot}\,,\boldsymbol{\cdot})=\left.x_1\,\langle\boldsymbol{\cdot}\,,\boldsymbol{\cdot}\rangle\right|_{\mathfrak{p}_1}+
\left.x_2\,\langle\boldsymbol{\cdot}\,,\boldsymbol{\cdot}\rangle\right|_{\mathfrak{p}_2}+
\left.x_3\,\langle\boldsymbol{\cdot}\,,\boldsymbol{\cdot}\rangle\right|_{\mathfrak{p}_3},
\end{equation}
where $x_1$, $x_2$, $x_3$ are positive real numbers.

In~\cite{Nikonorov2},  it was shown that every generalized Wallach space
admits at least one invariant Einstein metric.
This result could not be improve in general (e.~g.~$SU(2)$ admits exactly one invariant Einstein metric).
Later in \cite{Lomshakov2}, a detailed study of invariant Einstein metrics was developed
for all generalized Wallach spaces. In particular, it is proved that there are at most four
Einstein metrics (up to homothety) for every such space.
A detailed discussion and the references related to all known results on Einstein invariant metrics on generalized Wallach spaces
one can find in~\cite{Nikonorov1}. More detailed information on invariant Einstein metric on general homogeneous spaces could be found in
\cite{Bes, Boehm2004, BWZ, Wang1, Wang2}.

In the recent papers \cite{AANS1,AANS2}, generalized Wallach spaces were studied from the point of view of the Ricci flow.
Some results of these papers we will discuss in the last section.
\smallskip

Denote by $d_i$ the~dimension of~$\mathfrak{p}_i$. Let~$\big\{e^j_i\big\}$ be
an~orthonormal basis in~$\mathfrak{p}_i$ with respect to
$\langle\boldsymbol{\cdot}\,,\boldsymbol{\cdot}\rangle$, where
$i\in\{1,2,3\}$, $1\le j\le d_i=\dim(\mathfrak{p}_i)$. Consider
the~expression $[ijk]$ defined by
the~equality
\begin{equation}\label{constwz}
[ijk]=\sum_{\alpha,\beta,\gamma}
\left\langle\big[e_i^\alpha,e_j^\beta \big],e_k^\gamma \right\rangle^2,
\end{equation}
where $\alpha$, $\beta$, and $\gamma$ range from~1 to ~$d_i$, $d_j$, and
$d_k$ respectively, see \cite{WZ3}. The~symbols $[ijk]$ are symmetric in all three indices by the bi-invariance
of the~metric
$\langle\boldsymbol{\cdot}\,,\boldsymbol{\cdot}\rangle$.
Moreover, for spaces under consideration, we have
$[ijk]=0$
if two indices coincide. Therefore, the~quantity
\begin{equation}\label{consta}
A:=[123]
\end{equation}
plays an~important role.
It easy to see that $d_i\ge 2A$ for all $i=1,2,3$ (see \cite{Nikonorov2} or Lemma \ref{calca} below).
Hence the following constant
\begin{equation}\label{a123}
a_i=\frac{A}{d_i},\quad i\in \{1,2,3\},
\end{equation}
are such that $(a_1,a_2,a_3)\in [0,1/2]^3$.
Note, that these constants completely determine some important properties of a generalized Wallach space $G/H$, e.~g. the equation
of the Ricci flow on~$G/H$, see \cite{AANS1,AANS2}. Of course, not every triple
$(a_1,a_2,a_3)\in [0,1/2]^3$ corresponds to some generalized Wallach space.
A complete description of suitable triples we will get together with the classification of generalized Wallach spaces.

\smallskip
Now we are ready to state the main result of this paper.

\begin{theorem}\label{main}
Let $G/H$ be a connected and simply connected compact homogeneous space. Then $G/H$ is a generalized Wallach space if and only if
one of the following assertions holds:

{\rm 1)} $G/H$ is a direct product of three irreducible symmetric spaces of compact type {\rm (}$A=a_1=a_2=a_3=0$ in this case{\rm)};

{\rm 2)} The group $G$ is simple and the pair $(\mathfrak{g}, \mathfrak{h})$ is one of the pairs in Table 1 {\rm (}the embedding of $\mathfrak{h}$
to $\mathfrak{g}$ is determined by the following requirement: the corresponding pairs $(\mathfrak{g}, \mathfrak{k}_i)$ and $(\mathfrak{k}_i,\mathfrak{h})$,
$i=1,2,3$, in Table 2 are symmetric{\rm )};

{\rm 3)} $G=F\times F\times F \times F$ and $H=\diag(F)\subset G$ for some connected simply connected compact simple Lie group $F$,
with the following description on the Lie algebra level:
$$
(\mathfrak{g}, \mathfrak{h})= \bigl(\mathfrak{f}\oplus \mathfrak{f}\oplus\mathfrak{f}\oplus\mathfrak{f},\,\diag(\mathfrak{f})=\{(X,X,X,X)\,|\, X\in f\}\bigr),
$$
where $\mathfrak{f}$ is the Lie algebra of $F$, and {\rm (}up to permutation{\rm)}
$\mathfrak{p}_1=\{(X,X,-X,-X)\,|\, X\in f\}$, \linebreak
$\mathfrak{p}_2\!=\!\{(X,-X,X,-X)\,|\, X\in f\}$,\!
$\mathfrak{p}_3\!=\!\{(X,-X,-X,X)\,|\, X\in f\}$\! {\rm(}$a_1\!=\!a_2\!=\!a_3\!=\!1/4$ in this case{\rm)}.
\end{theorem}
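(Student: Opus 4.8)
The plan is to reduce the classification to a structural analysis of the pair $(\mathfrak{g},\mathfrak{h})$ forced by conditions (\ref{decom1})--(\ref{decom2}), and then to handle the simple and non-simple cases separately. First I would exploit (\ref{decom2}): setting $\mathfrak{k}_i := \mathfrak{h} \oplus \mathfrak{p}_i$ for $i=1,2,3$, the Jacobi identity together with $[\mathfrak{p}_i,\mathfrak{p}_i]\subset\mathfrak{h}$ and the $\ad(\mathfrak{h})$-invariance of each $\mathfrak{p}_i$ shows that each $\mathfrak{k}_i$ is a subalgebra of $\mathfrak{g}$, and that $(\mathfrak{g},\mathfrak{k}_i)$ is a symmetric pair with isotropy module $\mathfrak{p}_j\oplus\mathfrak{p}_k$ ($\{i,j,k\}=\{1,2,3\}$), since $[\mathfrak{p}_j\oplus\mathfrak{p}_k,\mathfrak{p}_j\oplus\mathfrak{p}_k]\subset \mathfrak{k}_i$ (here one uses that the ``mixed'' bracket $[\mathfrak{p}_j,\mathfrak{p}_k]$ lands in $\mathfrak{p}_i$, which follows from bi-invariance and irreducibility). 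Likewise $(\mathfrak{k}_i,\mathfrak{h})$ is a symmetric pair with isotropy module $\mathfrak{p}_i$. So a generalized Wallach space produces a ``nested'' system of three symmetric pairs, and conversely such a system recovers the decomposition. This is the key reduction that makes Tables 1 and 2 meaningful.

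Next I would separate cases according to the structure of $\mathfrak{g}$. If $\mathfrak{g}$ is semisimple but not simple, write $\mathfrak{g}=\mathfrak{g}^{(1)}\oplus\cdots\oplus\mathfrak{g}^{(m)}$ as a sum of simple ideals; the $\Ad(H)$-irreducibility of the $\mathfrak{p}_i$ and almost-effectiveness severely constrain how $\mathfrak{h}$ and the $\mathfrak{p}_i$ distribute across the factors. The ``trivial'' distribution, where $\mathfrak{h}$ is a direct sum of ideals $\mathfrak{h}\cap\mathfrak{g}^{(r)}$ and each $\mathfrak{p}_i$ sits inside one factor, gives case 1) (a product of three irreducible symmetric spaces, whence $[ijk]=0$ always and $A=0$). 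The only genuinely ``diagonal'' possibility compatible with having exactly three irreducible summands turns out to be $\mathfrak{g}=\mathfrak{f}^{\oplus 4}$ with $\mathfrak{h}=\diag(\mathfrak{f})$: here the complement decomposes, as $\diag(\mathfrak{f})$-module, into three copies of $\mathfrak{f}$ — the three ``sign patterns'' $(X,X,-X,-X)$, $(X,-X,X,-X)$, $(X,-X,-X,X)$ — each irreducible (as $\mathfrak f$ is simple), and each of these satisfies (\ref{decom2}) by a direct sign computation; one then computes $A=a_i d_i$ directly from (\ref{constwz}) to get $a_i=1/4$. Ruling out all other diagonal embeddings (e.g. $\mathfrak{f}^{\oplus 2}$ or $\mathfrak{f}^{\oplus 3}$ with various diagonals, or mixed simple-type configurations) is a finite but delicate bookkeeping step: one shows the complement then has either too few or too many irreducible summands, or that (\ref{decom2}) fails.

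Finally, for $\mathfrak{g}$ simple I would run through the list of symmetric pairs $(\mathfrak{g},\mathfrak{k})$ (E.~Cartan's classification) and ask which $\mathfrak{k}$ themselves admit a symmetric-pair decomposition $(\mathfrak{k},\mathfrak{h})$ such that the two isotropy modules fit together into the required three-module pattern — equivalently, which $\mathfrak{g}$ admit three subalgebras $\mathfrak{k}_1,\mathfrak{k}_2,\mathfrak{k}_3$ with common intersection $\mathfrak{h}$, pairwise ``compatible'' in the symmetric sense above. Concretely this is the condition that $\mathfrak{g}$ carries three commuting involutions $\sigma_1,\sigma_2,\sigma_3$ with $\sigma_1\sigma_2\sigma_3=\Id$ (so they generate a $\mathbb{Z}_2\times\mathbb{Z}_2$), with $\mathfrak{h}$ the common fixed-point set and $\mathfrak{p}_i$ the $(+,-,-)$-type eigenspaces; irreducibility of each $\mathfrak{p}_i$ then cuts the list down to the finitely many entries of Table 1. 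The main obstacle is exactly this last step: it requires a careful, essentially case-by-case verification over all simple $\mathfrak{g}$ that no $\mathbb{Z}_2^2$-grading with irreducible pieces has been missed and that each one listed genuinely has all three $\mathfrak{p}_i$ irreducible (some a priori candidates fail irreducibility and must be discarded), together with the computation of the constants $a_i$ from the dimensions $d_i$ and from $A=[123]$ in each case. I would organize this by first classifying $\mathbb{Z}_2\times\mathbb{Z}_2$-symmetric spaces of simple type (these are known in the literature), then imposing the three irreducibility constraints and reading off Table 1, checking consistency with Lemma \ref{calca} (which gives $d_i\ge 2A$) throughout.
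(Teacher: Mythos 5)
Your proposal follows essentially the same route as the paper: pass to the subalgebras $\mathfrak{k}_i=\mathfrak{h}\oplus\mathfrak{p}_i$ and the associated commuting involutions generating a $\mathbb{Z}_2\times\mathbb{Z}_2$ in $\Aut(\mathfrak{g})$, split into cases by the decomposition of $\mathfrak{g}$ into simple ideals (yielding the product case and the Ledger--Obata case $\mathfrak{f}^{\oplus 4}/\diag(\mathfrak{f})$), and for simple $\mathfrak{g}$ filter the known classification of $\mathbb{Z}_2\times\mathbb{Z}_2$-symmetric spaces by irreducibility of the three modules. This is exactly the paper's strategy (Theorems~\ref{struc8} and \ref{simplegroup}, Proposition~\ref{struc9}), so the proposal is correct in approach, with the understanding that the case-by-case verifications it defers are the bulk of the actual work.
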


{\small
\renewcommand{\arraystretch}{1.8}
\begin{table}[t]
{\bf Table 1.} The pairs $(\mathfrak{g},\mathfrak{h})$ corresponded to generalized Wallach spaces $G/H$ with simple $G$.\newline
\begin{center}
\begin{tabular}{|c|c|c|c|c|c|c|c|c|}
\hline
N& \ $\mathfrak{g}$& $\mathfrak{h}$ & $d_1$ &  $d_2$ & $d_3$  & $a_1$ &  $a_2$ & $a_3$ \\
\hline\hline
1&\!\!$so(k+l+m)$\!\!\!&\!\!$so(k)\oplus so(l) \oplus so(m)$\!\!& $kl$ & $km$ & $lm$ &
\!\!$\frac{m}{2(k+l+m-2)}$\!\!&\!\!$\frac{l}{2(k+l+m-2)}$\!\!&\!\!$\frac{k}{2(k+l+m-2)}$\!\!\\  \hline
2&\!\!$su(k+l+m)$\!\!\!&$\!\!s(u(k)\oplus u(l) \oplus u(m))$\!\!& $2kl$ & $2km$ & $2lm$ &
\!\!$\frac{m}{2(k+l+m)}$\!\!&\!\!$\frac{l}{2(k+l+m)}$\!\!&\!\!$\frac{k}{2(k+l+m)}$\!\!\\  \hline
3&\!\!$sp(k+l+m)$\!\!\!&\!\!$sp(k)\oplus sp(l) \oplus sp(m)$\!\!& $4kl$ & $4km$ & $4lm$ &
\!\!$\frac{m}{2(k+l+m+1)}$\!\!&\!\!$\frac{l}{2(k+l+m+1)}$\!\!&\!\!$\frac{k}{2(k+l+m+1)}$\!\!\\  \hline

4&$su(2l)$,\,\, $l\geq 2$ &$u(l)$ &\!\!$l(l-1\!)$\!\! &\!\!$l(l+1\!)$\!\!&\!\!$l^2-1$\!\!&  $\frac{l+1}{4l}$ &
$\frac{l-1}{4l}$ &  $\frac{1}{4}$ \\  \hline

5&$so(2l)$,\,\, $l\geq 4$ &$u(1)\oplus u(l-1\!)$ &\!\!$2(l-1\!)$\!\! &\!\!$2(l-1\!)$\!\!&\!\!$(l\!-\!1\!)(l\!-\!2)$\!\!&  $\frac{l-2}{4(l-1)}$ &
$\frac{l-2}{4(l-1)}$ &  $\frac{1}{2(l-1)}$ \\  \hline
6&$e_6$ &$su(4)\oplus 2sp(1)\oplus \mathbb{R}$   &$16$ & $16$ & $24$ &  $\frac{1}{4}$ &  $\frac{1}{4}$ &  $\frac{1}{6}$ \\  \hline
7&$e_6$ &$so(8)\oplus \mathbb{R}^2$ & $16$ & $16$ & $16$ &  $\frac{1}{6}$ &  $\frac{1}{6}$ &  $\frac{1}{6}$ \\  \hline
8&$e_6$ &$sp(3)\oplus sp(1)$ & $14$ & $28$ & $12$ &  $\frac{1}{4}$ &  $\frac{1}{8}$ &  $\frac{7}{24}$ \\  \hline

9&$e_7$ &$so(8)\oplus 3sp(1)$ & $32$ & $32$ & $32$ &  $\frac{2}{9}$ &  $\frac{2}{9}$ &  $\frac{2}{9}$ \\  \hline
10&$e_7$ &$su(6)\oplus sp(1)\oplus \mathbb{R}$ & $30$ & $40$ & $24$ &  $\frac{2}{9}$ &  $\frac{1}{6}$ &  $\frac{5}{18}$ \\  \hline
11&$e_7$ &$so(8)$ & $35$ & $35$ & $35$ &  $\frac{5}{18}$ &  $\frac{5}{18}$ &  $\frac{5}{18}$ \\  \hline

12&$e_8$ &$so(12)\oplus 2sp(1)$ & $64$ & $64$ & $48$ &  $\frac{1}{5}$ &  $\frac{1}{5}$ &  $\frac{4}{15}$ \\  \hline
13&$e_8$ &$so(8)\oplus so(8)$ & $64$ & $64$ & $64$ &  $\frac{4}{15}$ &  $\frac{4}{15}$ &  $\frac{4}{15}$ \\  \hline

14&$f_4$ &$so(5)\oplus 2sp(1)$ & $8$ & $8$ & $20$ &  $\frac{5}{18}$ &  $\frac{5}{18}$ &  $\frac{1}{9}$ \\  \hline
15&$f_4$ &$so(8)$ & $8$ & $8$ & $8$ &  $\frac{1}{9}$ &  $\frac{1}{9}$ &  $\frac{1}{9}$ \\  \hline

\end{tabular}
\end{center}
\end{table}
\renewcommand{\arraystretch}{1}
}

The paper is divided into five sections. In Section 1 we discuss connections between
generalized Wallach spaces and $\mathbb{Z}_2\times\mathbb{Z}_2$-subgroups
in the automorphism groups $\Aut (\mathfrak{g})$ of compact Lie algebras~$\mathfrak{g}$.
The second section is devoted to general structural results on generalized Wallach spaces $G/H$ with connected $H$.
In Section 3 we get a classification of generalized Wallach spaces $G/H$ with simple $G$ and connected $H$.
In Section 4 we calculate the values of $a_1$, $a_2$, $a_3$ for all pairs in Table~1.
Finally, in the last section we discuss properties of the set of points $(a_1,a_2,a_3)\in [0,1/2]^3 \subset \mathbb{R}^3$.
\medskip

The {\bf proof of Theorem \ref{main}} follows immediately from Theorem~\ref{struc8}, Theorem~\ref{simplegroup}, and Proposition~\ref{struc9}.
The calculations of $a_1$, $a_2$, $a_3$ for all pairs in Table 1 are performed in Section \ref{calculation}.
\bigskip

The author thanks  Christoph B\"{o}hm, Vicente Cort\'{e}s, and Yuri Nikolayevsky  for interesting discussions concerning this project.

\bigskip

\section{Generalized Wallach spaces and involutive automorphisms}\label{waau}

Let us consider connected compact homogeneous spaces $G/H$
with the properties (\ref{decom1}) and (\ref{decom2}). We emphasize that we do not demand that the modules $\mathfrak{p}_i$ are $\Ad(H)$-irreducible now.
The inclusion $[\mathfrak{p}_i,\mathfrak{p}_i]\subset \mathfrak{h}$ implies that
\begin{equation}\label{subalgi}
\mathfrak{k}_i:=\mathfrak{h}\oplus \mathfrak{p}_i
\end{equation}
is a~subalgebra of~$\mathfrak{g}$ for any $i$, and the pair $(\mathfrak{k}_i, \mathfrak{h})$ is irreducible symmetric (it could be non-effective, of course).
From (\ref{decom1}) and (\ref{decom2}) we easily get that $[\mathfrak{p}_j, \mathfrak{p}_k]\subset \mathfrak{p}_i$ for pairwise distinct $i,j,k$.
Therefore,
$$
[\mathfrak{p}_j\oplus \mathfrak{p}_k, \mathfrak{p}_j\oplus \mathfrak{p}_k]\subset \mathfrak{h}\oplus \mathfrak{p}_i=\mathfrak{k}_i, \quad \{i,j,k\}=\{1,2,3\},
$$
and all the pairs $(\mathfrak{g},\mathfrak{k}_i)$ are also irreducible symmetric.

Let us consider involutive automorphisms
$$
\sigma_i: \mathfrak{g} \mapsto \mathfrak{g}, \quad i \in \{1,2,3\},
$$
of the Lie algebra $\mathfrak{g}$, such that
$$
\sigma_i|_{\mathfrak{k}_i} = \Id, \quad \sigma_i|_{\mathfrak{p}_j\oplus \,\mathfrak{p}_k} = - \Id,
$$
which do exist due to well known structure results (see e.~g. \cite[theorem 8.1.4]{Wolf2011}).
It is easy to see that
$$
\sigma_i \circ \sigma_j =\sigma_j \circ \sigma_i =\sigma_k
$$
for pairwise distinct $i,j,k$. Keeping in mind, that $\sigma_1 \circ \sigma_1 = \sigma_2 \circ \sigma_2=\sigma_3 \circ \sigma_3=\Id$ on $\mathfrak{g}$,
we get the following

\begin{pred}\label{sruct1}
The automorphisms $\sigma_1$,  $\sigma_2$, and $\sigma_3$ generate
a $\mathbb{Z}_2\times\mathbb{Z}_2$-subgroup in $\Aut (\mathfrak{g})$, the group of automorphisms of the Lie algebra $\mathfrak{g}$.
Every pair of these automorphisms are the generators of this group.
\end{pred}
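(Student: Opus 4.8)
The plan is to take the multiplication rules already derived just above the statement and read off the group structure directly. First I would record the key facts established in the preceding paragraph: each $\sigma_i$ is an involutive automorphism of $\mathfrak{g}$ (so $\sigma_i^2=\Id$), the three automorphisms commute, and $\sigma_i\circ\sigma_j=\sigma_k$ whenever $\{i,j,k\}=\{1,2,3\}$. From these it follows immediately that the four-element set $\{\Id,\sigma_1,\sigma_2,\sigma_3\}$ is closed under composition: products involving $\Id$ are trivial, $\sigma_i\circ\sigma_i=\Id$, and $\sigma_i\circ\sigma_j=\sigma_k$. Since each element is its own inverse and the identity lies in the set, this set is a subgroup $\Gamma\subset\Aut(\mathfrak{g})$, and by construction it is generated by $\sigma_1,\sigma_2,\sigma_3$.

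Next I would check that $\Gamma$ has exactly four elements, i.e. that $\Id,\sigma_1,\sigma_2,\sigma_3$ are pairwise distinct. This is the one place where one must use that the modules $\mathfrak{p}_i$ are nonzero: if $\sigma_i=\sigma_j$ for $i\neq j$, then restricting to $\mathfrak{p}_i$ gives $\Id=\sigma_i|_{\mathfrak{p}_i}=\sigma_j|_{\mathfrak{p}_i}=-\Id$, forcing $\mathfrak{p}_i=0$, a contradiction; and $\sigma_i=\Id$ would likewise force $\mathfrak{p}_j=0$ for some $j\neq i$. Hence $|\Gamma|=4$. Every non-identity element of $\Gamma$ has order $2$, so $\Gamma$ is not cyclic; being abelian (by the commutativity noted above) of order $4$ with three elements of order two, it is isomorphic to the Klein four-group $\mathbb{Z}_2\times\mathbb{Z}_2$. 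This proves the first assertion.

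Finally, for the last sentence I would observe that for any two distinct indices $i,j$ the subgroup $\langle\sigma_i,\sigma_j\rangle$ already contains $\Id$, $\sigma_i$, $\sigma_j$, and $\sigma_i\circ\sigma_j=\sigma_k$, hence all of $\Gamma$; so any pair among $\sigma_1,\sigma_2,\sigma_3$ generates the group. I do not anticipate a genuine obstacle: all of the substantive content (existence of the $\sigma_i$ via the symmetric-pair structure, and the relation $\sigma_i\circ\sigma_j=\sigma_k$) has already been dispatched before the statement, and what remains is elementary group-theoretic bookkeeping, the only subtlety being the harmless remark that the $\mathfrak{p}_i$ are nonzero so that the four automorphisms are genuinely distinct.
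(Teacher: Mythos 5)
Your argument is correct and follows essentially the same route as the paper, which states the proposition as an immediate consequence of the relations $\sigma_i^2=\Id$ and $\sigma_i\circ\sigma_j=\sigma_k$ derived just beforehand. Your additional check that $\Id,\sigma_1,\sigma_2,\sigma_3$ are pairwise distinct (using that the modules $\mathfrak{p}_i$ are nonzero) is a detail the paper leaves implicit, and it is handled correctly.
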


Now, let $\mathfrak{g}$ be a compact semisimple Lie algebra and let $\Gamma$ be a $\mathbb{Z}_2\times\mathbb{Z}_2$-subgroup
in the group of automorphisms $\Aut (\mathfrak{g})$ of $\mathfrak{g}$. Suppose that $\sigma_1$ and $\sigma_2$ are generators of  $\Gamma$,
and consider  an inner product $-B$ on $\mathfrak{g}$, where $B$ is the Killing form of $\mathfrak{g}$.
Since $\sigma_1\circ \sigma_1=\sigma_2\circ \sigma_2=\Id$ and $\sigma_1\circ \sigma_2=\sigma_2\circ \sigma_1$,
we have {\bf commutating normal operators $\sigma_1$ and $\sigma_2$} on the Euclidean space $(\mathfrak{g},-B)$.
Moreover, since they are involutions, their eigenvalues are exactly $1$ and $-1$.
Therefore, these operator could be {\bf diagonalized simultaneously}, see e.~g. \cite[2.5.15]{HJ2013}.
\smallskip

Let consider the following linear subspaces of $\mathfrak{g}$:
\begin{eqnarray*}
\mathfrak{h}=\{X\in \mathfrak{g}\,|\, \sigma_1(X)=\sigma_2(X)=X\}, &
\mathfrak{p}_1=\{X\in \mathfrak{g}\,|\, -\sigma_1(X)=\sigma_2(X)=X\},\\
\mathfrak{p}_2=\{X\in \mathfrak{g}\,|\, \sigma_1(X)=-\sigma_2(X)=X\}, &
\mathfrak{p}_3=\{X\in \mathfrak{g}\,|\, -\sigma_1(X)=-\sigma_2(X)=X\}.\,\,\,\,
\end{eqnarray*}
Clear, that all this subspaces are pairwise orthogonal with respect to $-B$,
$\mathfrak{h}$ is a Lie subalgebra in $\mathfrak{g}$, $\mathfrak{g}=\mathfrak{h}\oplus \mathfrak{p}_1 \oplus \mathfrak{p}_2 \oplus \mathfrak{p}_3$,
and $[\mathfrak{p}_i,\mathfrak{p}_i]\subset \mathfrak{h}$, $i \in \{1,2,3\}$.
Therefore, we get a compact homogeneous space $G/H$ with the properties (\ref{decom1}) and (\ref{decom2}),
where $G$ is a connected and simply connected Lie group with the Lie algebra
$\mathfrak{g}$ and $H$ is its connected subgroup corresponding to the Lie subalgebra $\mathfrak{h}$.
Hence we get the following

\begin{theorem} \label{theorem1}
There is a one-to-one correspondence between
$\mathbb{Z}_2\times\mathbb{Z}_2$-subgroups
in the automorphism groups $\Aut (\mathfrak{g})$ of compact semisimple Lie algebras $\mathfrak{g}$ and
connected and simply connected compact homogeneous spaces
with the properties {\rm(\ref{decom1})} and {\rm(\ref{decom2})}.
\end{theorem}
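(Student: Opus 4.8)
The plan is to establish the two directions of the claimed bijection separately, reusing the construction already given in the text. In one direction, suppose $G/H$ is a connected and simply connected compact homogeneous space with the properties (\ref{decom1}) and (\ref{decom2}). Then the discussion preceding Proposition~\ref{sruct1} produces, from the subalgebras $\mathfrak{k}_i=\mathfrak{h}\oplus\mathfrak{p}_i$, three involutive automorphisms $\sigma_1,\sigma_2,\sigma_3$ of $\mathfrak{g}$, and Proposition~\ref{sruct1} tells us they generate a $\mathbb{Z}_2\times\mathbb{Z}_2$-subgroup $\Gamma\subset\Aut(\mathfrak{g})$. So the assignment $G/H\mapsto\Gamma$ is defined. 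Conversely, given a $\mathbb{Z}_2\times\mathbb{Z}_2$-subgroup $\Gamma\subset\Aut(\mathfrak{g})$ with chosen generators $\sigma_1,\sigma_2$, the simultaneous diagonalization argument in the text yields the decomposition $\mathfrak{g}=\mathfrak{h}\oplus\mathfrak{p}_1\oplus\mathfrak{p}_2\oplus\mathfrak{p}_3$ into common eigenspaces, the relations $[\mathfrak{p}_i,\mathfrak{p}_i]\subset\mathfrak{h}$ follow from the automorphism property (the product of two $(-1)$-eigenvectors for a given $\sigma$ lies in its $(+1)$-eigenspace), and one passes to $G$ connected and simply connected with Lie algebra $\mathfrak{g}$ and $H$ the connected subgroup with Lie algebra $\mathfrak{h}$; since $G$ is simply connected and $H$ is connected, the homogeneous space $G/H$ is determined and is simply connected, so the assignment $\Gamma\mapsto G/H$ is defined.

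Next I would check that these two assignments are mutually inverse. Starting from $G/H$, building $\Gamma$, and then reconstructing a homogeneous space from $\Gamma$: the common eigenspace decomposition attached to the generators $\sigma_1,\sigma_2$ of $\Gamma$ recovers exactly $\mathfrak{h}$ and $\mathfrak{p}_1,\mathfrak{p}_2,\mathfrak{p}_3$ as originally given — this is immediate from how the $\sigma_i$ were defined (via $\sigma_i|_{\mathfrak{k}_i}=\Id$, $\sigma_i|_{\mathfrak{p}_j\oplus\mathfrak{p}_k}=-\Id$). Since $G$ was already connected and simply connected and $H$ connected, we land back on the same space. Going the other way, starting from $\Gamma$, producing $G/H$, and then forming the involutions from the subalgebras $\mathfrak{k}_i=\mathfrak{h}\oplus\mathfrak{p}_i$: one verifies that the involution fixing $\mathfrak{k}_i$ and acting as $-\Id$ on its $-B$-orthogonal complement agrees with the corresponding element of $\Gamma$, so the subgroup generated is $\Gamma$ again. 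A point worth a sentence is independence of the choice of generators: $\Gamma$ has three pairs of generators, but each pair yields the same unordered decomposition into $\mathfrak{h}$ and the three $\mathfrak{p}_i$'s (permuting which generator is called $\sigma_1$ versus $\sigma_2$ just permutes the labels on $\mathfrak{p}_1,\mathfrak{p}_2,\mathfrak{p}_3$), which matches the fact that the defining conditions (\ref{decom1})--(\ref{decom2}) are symmetric in the three modules.

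The steps that need genuine care rather than mere bookkeeping are two. First, the existence of the $\sigma_i$ in the forward direction: one needs that an irreducible (or merely effective) symmetric pair $(\mathfrak{g},\mathfrak{k}_i)$ together with a $-B$-orthogonal reductive complement actually admits an involutive automorphism of $\mathfrak{g}$ realizing the $\mathbb{Z}_2$-grading — this is the structure theorem cited as \cite[theorem 8.1.4]{Wolf2011} and may be invoked. Second, and this is the main obstacle, one must be careful that $\mathfrak{g}$ semisimple (not just having the three-module decomposition) is enough for everything to close up: the argument uses negative-definiteness of $B$ to get a genuine inner product and hence normal, simultaneously diagonalizable operators, and it uses that an automorphism of $\mathfrak{g}$ is an isometry of $-B$. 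These are standard but should be stated. I do not expect the group-level passage (connected simply connected $G$, connected $H$) to cause trouble, since on a simply connected $G$ the connected subgroup integrating $\mathfrak{h}$ is unique and closed (as $H$ is compact, being the fixed-point set up to connectedness of the commuting involutions lifted to $G$), so the correspondence is well-defined at the level of spaces and not merely of pairs of Lie algebras.
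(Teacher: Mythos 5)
Your argument is correct and follows essentially the same route as the paper, whose ``proof'' of Theorem~\ref{theorem1} is precisely the two constructions in the surrounding discussion (the involutions $\sigma_i$ built from $\mathfrak{k}_i=\mathfrak{h}\oplus\mathfrak{p}_i$ in one direction, simultaneous diagonalization of commuting $-B$-isometric involutions in the other). Your added verification that the two assignments are mutually inverse, and that the joint-eigenspace decomposition is independent of the choice of generators of $\Gamma$ up to permuting the $\mathfrak{p}_i$, only makes explicit what the paper leaves implicit.
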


In order to classify all (connected and simply connected) generalized Wallach spaces, it is enough to classify ``suitable''
$\mathbb{Z}_2\times\mathbb{Z}_2$-subgroups in the group of automorphisms $\Aut (\mathfrak{g})$ of compact semisimple Lie algebras $\mathfrak{g}$.
Here, ``suitable'' means that the corresponding modules $\mathfrak{p}_i$ are $\Ad(H)$-irreducible or, equivalently (due to connectedness of $H$),
$\ad(\mathfrak{h})$-irreducible. We will realize this idea for generalized Wallach spaces $G/H$ with simple $G$ in Section \ref{simple}. But in the general
case we should get more detailed structural results in the next section.

\section{On the structure of generalized Wallach spaces}

Here we consider the structure of {\bf a generalized Wallach space $G/H$ with connected $H$}.
Recall, that the properties of a module $\mathfrak{q}\subset \mathfrak{p}$ to be $\Ad(H)$-invariant and
$\ad(\mathfrak{h})$-invariant are equivalent for a connected group $H$.
We will use notations as above.
Since the Lie algebra $\mathfrak{g}$ is semisimple, then we can decompose it into a
($\langle\boldsymbol{\cdot}\,,\boldsymbol{\cdot}\rangle$-orthogonal) sum of simple ideals
$$
\mathfrak{g}=\mathfrak{g}_1\oplus\mathfrak{g}_2\oplus \cdots \oplus \mathfrak{g}_s.
$$
Let $\varphi_i:\mathfrak{h} \rightarrow \mathfrak{g}_i$ be the $\langle\boldsymbol{\cdot}\,,\boldsymbol{\cdot}\rangle$-orthogonal projection.
It is easy to see that all these projections are Lie algebra homomorphisms.
We rearrange indices so that $\varphi_i(\mathfrak{h})\neq \mathfrak{g}_i$ for $i=1,2,\dots,p$ and $\varphi_i(\mathfrak{h})= \mathfrak{g}_i$ for $i=p+1,\dots,s$.

Since the Lie algebra $\mathfrak{h}$ is compact, then we can decompose it into a
($\langle\boldsymbol{\cdot}\,,\boldsymbol{\cdot}\rangle$-orthogonal) sum of the center and simple ideals
$$
\mathfrak{h}= \mathbb{R}^l\oplus \mathfrak{h}_1\oplus\mathfrak{h}_2\oplus \cdots \oplus \mathfrak{h}_m.
$$
For $i=1,\dots,m$, we denote by $a^i$ the vector $(a^i_1,a^i_2,\dots, a^i_s)\in \mathbb{R}^s$, where
$a^i_j=1$, if $\varphi_j(\mathfrak{h}_i)$ is isomorphic to $\mathfrak{h}_i$, and $a^i_j=0$, if $\varphi_j(\mathfrak{h}_i)$ is a trivial Lie algebra
(there is no another possibility, because $\varphi_j$ is a Lie algebra homomorphism).
It is easy to see that $\sum_{i=1}^m a^i_j=1$ for $j=p+1,\dots,s$, since $\varphi_j(\mathfrak{h})= \mathfrak{g}_j$ is a simple Lie algebra.
Denote also the number $\dim(\varphi_i(\mathbb{R}^l))$ by $u_i$ for $i=1,\dots,s$, and put $u=\sum_{i=1}^s u_i$, $v_i=\sum_{j=1}^s a^i_j$ for $i=1,\dots,m$.
It is clear that $u \geq l$ and $v_i\geq 1$ for all $i$.

\begin{lem}\label{struc1} In the above notation and suggestions, the following inequality holds:
$$
p+u+\sum_{i=1}^m v_i - l-m =p+(u-l)+\sum_{i=1}^m (v_i-1) \leq 3.
$$
\end{lem}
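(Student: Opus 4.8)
The plan is to exploit the decomposition $\mathfrak{p}=\mathfrak{p}_1\oplus\mathfrak{p}_2\oplus\mathfrak{p}_3$ together with the fact that each projection $\varphi_j:\mathfrak{h}\to\mathfrak{g}_j$ is a Lie algebra homomorphism, in order to bound the number of ``degeneracies'' of the embedding of $\mathfrak{h}$ into $\mathfrak{g}$. The key observation is that the quantity on the left counts exactly these degeneracies: the term $p$ counts simple ideals $\mathfrak{g}_j$ into which $\mathfrak{h}$ does not project onto; each term $u_i-l$ ``should'' be $0$ and measures the failure of the center $\mathbb{R}^l$ of $\mathfrak{h}$ to embed with full rank into a single $\mathfrak{g}_i$; and each $v_i-1$ measures how many extra simple ideals the simple ideal $\mathfrak{h}_i$ of $\mathfrak{h}$ gets ``diagonally'' spread across. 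First I would set up, for each simple ideal $\mathfrak{g}_j$ of $\mathfrak{g}$, the restricted symmetric triple: since $[\mathfrak{p}_i,\mathfrak{p}_i]\subset\mathfrak{h}$ and $[\mathfrak{p}_j,\mathfrak{p}_k]\subset\mathfrak{p}_i$ for $\{i,j,k\}=\{1,2,3\}$, the intersections $\mathfrak{g}_j\cap(\mathfrak{h}\oplus\mathfrak{p}_i)$ are subalgebras and one gets that each $\mathfrak{g}_j$ either meets some $\mathfrak{p}_i$ trivially or splits according to the $\mathbb{Z}_2\times\mathbb{Z}_2$-action; here I would invoke Theorem~\ref{theorem1} applied to the semisimple (not necessarily simple) algebra $\mathfrak{g}_j$ with the induced $\mathbb{Z}_2\times\mathbb{Z}_2$-subgroup.

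Next I would translate the defining conditions of a generalized Wallach space into statements about these projections. The crucial structural input is that the adjoint action of $\mathfrak{h}$ on $\mathfrak{g}$ is the ``diagonal'' of the actions of the $\varphi_j(\mathfrak{h})$ on the $\mathfrak{g}_j$, and that an element of the center $\mathbb{R}^l$ (resp.\ a simple ideal $\mathfrak{h}_i$) acts on $\mathfrak{g}$ through its images in the various $\mathfrak{g}_j$. For $j=1,\dots,p$, the fact that $\varphi_j(\mathfrak{h})\subsetneq\mathfrak{g}_j$ forces $\mathfrak{g}_j$ to contribute genuinely to the tangent module: more precisely, $\mathfrak{g}_j\cap\mathfrak{p}\ne 0$, and the symmetric pair structure on $\mathfrak{g}_j$ forces a nontrivial piece of this intersection to land in the $\mathfrak{p}_i$'s. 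I would then argue that each such ``contribution'' — each of the $p$ ideals with $\varphi_j(\mathfrak{h})\ne\mathfrak{g}_j$, each excess dimension $u_i-l$ of the center, and each excess $v_i-1$ of a simple ideal of $\mathfrak{h}$ — forces at least one additional irreducible summand inside $\mathfrak{p}_1\oplus\mathfrak{p}_2\oplus\mathfrak{p}_3$, in such a way that distinct contributions give distinct (or at least ``independently counted'') summands. Since $\mathfrak{p}$ is the direct sum of exactly three $\Ad(H)$-irreducible modules, the total count of these forced summands is at most $3$, which is precisely the asserted inequality.

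The step I expect to be the main obstacle is showing that the various contributions are genuinely independent, i.e.\ that one cannot have, say, a center-degeneracy and an ideal-degeneracy both ``accounted for'' by the same irreducible $\mathfrak{p}_i$. The honest way to handle this is a careful bookkeeping argument: one considers the centralizer $\mathfrak{z}_{\mathfrak{g}}(\mathfrak{h})$ and, more generally, the isotypic decomposition of $\mathfrak{p}$ under $\ad(\mathfrak{h})$, and shows that each of the three types of degeneracy produces a trivial (or otherwise distinguished) $\ad(\mathfrak{h})$-submodule of $\mathfrak{p}$ of a type that can only occur once per index $i$. For the $v_i-1$ count one uses that if $\mathfrak{h}_i$ projects nontrivially (isomorphically) into two different simple ideals $\mathfrak{g}_{j}$ and $\mathfrak{g}_{j'}$, then $\mathfrak{g}_j\oplus\mathfrak{g}_{j'}$ contains, besides $\varphi_j(\mathfrak{h}_i)\oplus\varphi_{j'}(\mathfrak{h}_i)$, a copy of $\mathfrak{h}_i$ that lies in $\mathfrak{p}$ (the ``anti-diagonal''), forcing an $\ad(\mathfrak{h})$-irreducible summand isomorphic to $\mathfrak{h}_i$; for the $u_i-l$ count one uses the analogous statement for the torus, where extra dimension in $\varphi_i(\mathbb{R}^l)$ beyond a line produces extra trivial or two-dimensional summands in $\mathfrak{p}$; and for $p$ one uses the symmetric-pair structure of $(\mathfrak{g}_j,\varphi_j(\mathfrak{h}))$. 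Once these three localizations are in place and seen not to overlap, summing over all indices and comparing with the number $3$ of irreducible summands of $\mathfrak{p}$ completes the proof.
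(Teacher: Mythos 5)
Your proposal follows essentially the same route as the paper: each of the $p$ ideals with $\varphi_i(\mathfrak{h})\neq\mathfrak{g}_i$, each of the $u-l$ excess central directions, and each of the $v_i-1$ anti-diagonal copies of $\mathfrak{h}_i$ forces its own $\ad(\mathfrak{h})$-irreducible summand of $\mathfrak{p}$, and $\mathfrak{p}$ has only three. The independence you flag as the main obstacle is in fact immediate, because the three kinds of submodules are pairwise orthogonal with respect to $\langle\boldsymbol{\cdot}\,,\boldsymbol{\cdot}\rangle$ by construction, so no further bookkeeping is needed.
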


\begin{proof}
For $i=1,2,\dots,p$, every $\mathfrak{g}_i$ contains at least one irreducible modules $\mathfrak{p}_j \subset \mathfrak{p}$, since
$\varphi_i(\mathfrak{h})\neq \mathfrak{g}_i$ and $\langle\boldsymbol{\cdot}\,,\boldsymbol{\cdot}\rangle$-orthogonal complement to
$\varphi_i(\mathfrak{h})$ in $\mathfrak{g}_i$ is a subset of $\mathfrak{p}$. This gives at least $p$ irreducible modules.
Further, an $\langle\boldsymbol{\cdot}\,,\boldsymbol{\cdot}\rangle$-orthogonal complement to
$\mathbb{R}^l$ in $\oplus_{i=1}^s \varphi_i(\mathbb{R}^l)$ is also a subset of $\mathfrak{p}$. It is clear that $\ad(\mathfrak{h})$ acts trivially
on this complement, hence we get exactly $u-l$ one-dimensional irreducible submodules in it.
Finally, for any $i=1,\dots,m$, an $\langle\boldsymbol{\cdot}\,,\boldsymbol{\cdot}\rangle$-orthogonal complement to
$\mathfrak{h}_i$ in $\oplus_{j=1}^s \varphi_j(\mathfrak{h}_i)$ is also subset of $\mathfrak{p}$.
In fact, we deal with compliment to $\diag(\mathfrak{h}_i)$ in $\mathfrak{h}_i\oplus \mathfrak{h}_i\oplus \cdots \oplus \mathfrak{h}_i$
($v_i$ pairwise isomorphic summands).
In this case we have exactly $(v_i-1)$ $\ad(\mathfrak{h})$-irreducible modules. Summing all numbers of irreducible submodules, we get the lemma.
\end{proof}

Without loss of generality we may rearrange the indices so that $v_1 \geq v_2 \geq \cdots \geq v_{m-1}\geq v_m (\geq 1)$. Then we get the following

\begin{cor}\label{struc2} In the above notation and suggestions, the following inequality holds:
$$
p\leq 3, \quad u-l \leq 3, \quad v_4=1, \quad v_3\leq 2.
$$
\end{cor}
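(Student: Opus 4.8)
The plan is to derive Corollary \ref{struc2} directly from Lemma \ref{struc1} by exploiting the non-negativity of each summand together with the ordering convention $v_1 \geq v_2 \geq \cdots \geq v_m \geq 1$. First I would record the three elementary facts on which everything rests: $p \geq 0$ (in fact $p \geq 0$ is all we need), $u - l \geq 0$ (since $u = \sum_i u_i \geq \dim(\mathbb{R}^l) = l$ because at least one projection $\varphi_i$ is faithful on the center, or more crudely because the $u_i$ sum to at least $l$), and $v_i - 1 \geq 0$ for each $i = 1,\dots,m$. Granting these, Lemma \ref{struc1} says that a sum of non-negative integers, namely $p + (u-l) + \sum_{i=1}^m (v_i - 1)$, is at most $3$.

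From this the four claimed inequalities are immediate. Since each of $p$, $u-l$, and each $v_i - 1$ is a non-negative summand in a total that does not exceed $3$, each individual summand is at most $3$: this gives $p \leq 3$ and $u - l \leq 3$ at once. For the statements about the $v_i$, I would argue by contradiction using the ordering. If $v_4 \geq 2$, then by $v_1 \geq v_2 \geq v_3 \geq v_4 \geq 2$ we would have $v_i - 1 \geq 1$ for $i = 1,2,3,4$, hence $\sum_{i=1}^m (v_i - 1) \geq 4 > 3$, contradicting Lemma \ref{struc1}; therefore $v_4 = 1$ (this presupposes $m \geq 4$, and if $m < 4$ the assertion $v_4 = 1$ is read as vacuous or as the default lower bound). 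Similarly, if $v_3 \geq 3$, then $v_1 \geq v_2 \geq v_3 \geq 3$ forces $\sum_{i=1}^m (v_i-1) \geq 2+2+2 = 6 > 3$, again a contradiction; hence $v_3 \leq 2$.

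There is essentially no hard step here — the corollary is a packaging of Lemma \ref{struc1} — so the only thing to be careful about is the bookkeeping of which quantities are guaranteed non-negative and the edge cases where $m$ is small, so that the symbols $v_3, v_4$ may not literally exist; in that regime the inequalities should be interpreted as holding trivially. I would also note in passing, for later use, that the same non-negativity argument yields finer constraints (for instance $v_2 \leq 4$, and various trade-offs such as: if $p = 3$ then $u = l$ and every $v_i = 1$), but I would only state precisely what is needed downstream, namely the four inequalities above.
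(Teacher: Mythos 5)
Your proof is correct and is exactly the argument the paper intends: the corollary is read off from Lemma \ref{struc1} by observing that $p$, $u-l$, and each $v_i-1$ are non-negative integers summing to at most $3$, and using the ordering $v_1\geq v_2\geq\cdots\geq v_m\geq 1$ to rule out $v_4\geq 2$ and $v_3\geq 3$. Your handling of the edge case $m<4$ and the extra remarks (e.g.\ the trade-off when $p=3$) are sound, so there is nothing to correct.
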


\begin{lem}\label{struc3} If two of the modules $\mathfrak{p}_1,\mathfrak{p}_2,\mathfrak{p}_3$ are subsets of $\mathfrak{g}_i$ for some $i=1,\dots s$, then
the third modules is also a subset of $\mathfrak{g}_i$. In this case $p=1$.
\end{lem}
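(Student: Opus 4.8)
The plan is to exploit the involutive automorphisms $\sigma_1,\sigma_2,\sigma_3$ constructed in Section~\ref{waau} together with the simplicity of $\mathfrak{g}_i$. Assume, after renumbering the modules, that $\mathfrak{p}_1,\mathfrak{p}_2\subset\mathfrak{m}:=\mathfrak{g}_i$. First I would note that each $\sigma_a$ maps every module $\mathfrak{p}_j$ onto itself, since it acts on $\mathfrak{p}_j$ as $+\Id$ or $-\Id$; hence $\sigma_a(\mathfrak{p}_1\oplus\mathfrak{p}_2)=\mathfrak{p}_1\oplus\mathfrak{p}_2\subset\mathfrak{m}$. As $\sigma_a$ is an automorphism of $\mathfrak{g}$, the image $\sigma_a(\mathfrak{m})$ is again a simple ideal of $\mathfrak{g}$; it meets $\mathfrak{m}$ in the nonzero subspace $\mathfrak{p}_1\oplus\mathfrak{p}_2$, and two simple ideals with nonzero intersection coincide, so $\sigma_a(\mathfrak{m})=\mathfrak{m}$ for $a=1,2,3$. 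Thus $\mathfrak{m}$ is invariant under the group $\langle\sigma_1,\sigma_2\rangle$, and therefore decomposes as the direct sum of its intersections with the joint eigenspaces $\mathfrak{h},\mathfrak{p}_1,\mathfrak{p}_2,\mathfrak{p}_3$ of $\sigma_1$ and $\sigma_2$:
$$
\mathfrak{m}=(\mathfrak{m}\cap\mathfrak{h})\oplus\mathfrak{p}_1\oplus\mathfrak{p}_2\oplus(\mathfrak{m}\cap\mathfrak{p}_3).
$$

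Since $\mathfrak{m}$ is an ideal, it is $\ad(\mathfrak{h})$-invariant, so $\mathfrak{m}\cap\mathfrak{p}_3$ is an $\ad(\mathfrak{h})$-submodule of the irreducible module $\mathfrak{p}_3$ and hence equals $0$ or $\mathfrak{p}_3$. If $\mathfrak{m}\cap\mathfrak{p}_3=\mathfrak{p}_3$, then $\mathfrak{p}_3\subset\mathfrak{g}_i$ and the first claim is proved; so the core of the argument is to rule out $\mathfrak{m}\cap\mathfrak{p}_3=0$. In that case $\mathfrak{m}=\mathfrak{h}_0\oplus\mathfrak{p}_1\oplus\mathfrak{p}_2$ with $\mathfrak{h}_0:=\mathfrak{m}\cap\mathfrak{h}$, and moreover $[\mathfrak{p}_1,\mathfrak{p}_2]\subset\mathfrak{p}_3\cap\mathfrak{m}=0$ (using $[\mathfrak{p}_1,\mathfrak{p}_2]\subset\mathfrak{p}_3$ and that $\mathfrak{m}$ is an ideal). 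Then the subspace $\mathfrak{i}:=\mathfrak{p}_1+[\mathfrak{p}_1,\mathfrak{p}_1]$ is, by a short computation with the Jacobi identity and the inclusions $[\mathfrak{p}_1,\mathfrak{p}_1]\subset\mathfrak{h}_0$, $[\mathfrak{h}_0,\mathfrak{p}_1]\subset\mathfrak{p}_1$, $[\mathfrak{p}_1,\mathfrak{p}_2]=0$, an ideal of $\mathfrak{m}$; it is nonzero and proper, since $\mathfrak{i}\subset\mathfrak{h}_0\oplus\mathfrak{p}_1$ does not contain the nonzero space $\mathfrak{p}_2$. This contradicts the simplicity of $\mathfrak{m}=\mathfrak{g}_i$, so necessarily $\mathfrak{m}\cap\mathfrak{p}_3=\mathfrak{p}_3$. (Equivalently, one may simply invoke that the isotropy representation of the symmetric pair $(\mathfrak{m},\mathfrak{h}_0)$ is irreducible because $\mathfrak{m}$ is simple, which is incompatible with the visible splitting into the nonzero $\ad(\mathfrak{h}_0)$-invariant pieces $\mathfrak{p}_1$ and $\mathfrak{p}_2$.)

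For the last assertion $p=1$, once all three modules lie in $\mathfrak{m}=\mathfrak{g}_i$ we have $\mathfrak{p}\subset\mathfrak{g}_i$. Any other simple ideal $\mathfrak{g}_k$ ($k\ne i$) is $\langle\cdot,\cdot\rangle$-orthogonal to $\mathfrak{g}_i\supset\mathfrak{p}$, hence $\mathfrak{g}_k\subset\mathfrak{p}^{\perp}=\mathfrak{h}$; but a nonzero ideal of $\mathfrak{g}$ contained in $\mathfrak{h}$ is excluded by almost-effectiveness of $G/H$. Therefore $\mathfrak{g}=\mathfrak{g}_i$ is simple, i.e. $s=1$, and since $\mathfrak{h}\ne\mathfrak{g}$ (because $\mathfrak{p}\ne0$) we get $\varphi_1(\mathfrak{h})\ne\mathfrak{g}_1$, that is $p=1$.

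The only genuinely nontrivial step is ruling out $\mathfrak{m}\cap\mathfrak{p}_3=0$; everything else is bookkeeping with the $\mathbb{Z}_2\times\mathbb{Z}_2$-action and with the orthogonal ideal decomposition of $\mathfrak{g}$. A point to watch is that when $\mathfrak{m}\cap\mathfrak{p}_3=0$ the fixed-point subalgebra of $\sigma_3|_{\mathfrak{m}}$ is exactly $\mathfrak{h}_0$ (the $(+1)$-eigenspace of $\sigma_3$ on $\mathfrak{g}$ is $\mathfrak{h}\oplus\mathfrak{p}_3$, so its intersection with $\mathfrak{m}$ is $(\mathfrak{m}\cap\mathfrak{h})\oplus(\mathfrak{m}\cap\mathfrak{p}_3)=\mathfrak{h}_0$), which is what makes $(\mathfrak{m},\mathfrak{h}_0)$ an honest symmetric pair and legitimizes the ideal construction; and throughout one uses that $\mathfrak{p}_1,\mathfrak{p}_2,\mathfrak{p}_3$ are nonzero, being irreducible modules.
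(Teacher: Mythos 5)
Your proof is correct. The decisive step --- exhibiting $\mathfrak{p}_1+[\mathfrak{p}_1,\mathfrak{p}_1]$ as a nonzero proper ideal of the simple algebra $\mathfrak{g}_i$ in the degenerate case --- is exactly the contradiction used in the paper, and the observation that a nonzero $\ad(\mathfrak{h})$-invariant subspace of the irreducible module $\mathfrak{p}_3$ must equal $\mathfrak{p}_3$ is likewise common to both arguments. Where you differ is in how you reduce to that degenerate case: you pull in the involutions $\sigma_1,\sigma_2$ from Section~\ref{waau}, show $\sigma_a(\mathfrak{g}_i)=\mathfrak{g}_i$ (two simple ideals with nonzero intersection coincide), and read off the four-block decomposition $\mathfrak{g}_i=(\mathfrak{g}_i\cap\mathfrak{h})\oplus\mathfrak{p}_1\oplus\mathfrak{p}_2\oplus(\mathfrak{g}_i\cap\mathfrak{p}_3)$, so that $\mathfrak{g}_i\cap\mathfrak{p}_3=0$ immediately forces both $[\mathfrak{p}_1,\mathfrak{p}_2]=0$ and $\mathfrak{g}_i=(\mathfrak{g}_i\cap\mathfrak{h})\oplus\mathfrak{p}_1\oplus\mathfrak{p}_2$. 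The paper instead works with the projection $\varphi_i(\mathfrak{h})$ and disposes of the cases $[\mathfrak{p}_1,\mathfrak{p}_2]\neq0$ and $\mathfrak{g}_i\neq\varphi_i(\mathfrak{h})+\mathfrak{p}_1+\mathfrak{p}_2$ separately by hand; your eigenspace bookkeeping buys a cleaner and more transparent case split at the cost of invoking the $\mathbb{Z}_2\times\mathbb{Z}_2$-machinery. Your treatment of the final assertion is also fine and in fact yields the stronger conclusion $s=1$ directly in this situation, whereas the paper only records $p=1$ here and derives $p=1\Rightarrow s=1$ in the general setting of Lemma~\ref{struc6}.
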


\begin{proof}
Suppose, e.~g. that $\mathfrak{p}_1,\mathfrak{p}_2 \subset \mathfrak{g}_i$, then $[\mathfrak{p}_1,\mathfrak{p}_2]\subset \mathfrak{g}_i\cap \mathfrak{p}_3$.
If $[\mathfrak{p}_1,\mathfrak{p}_2]\neq 0$, then a nonempty module $\mathfrak{g}_i\cap \mathfrak{p}_3$ is $\ad(\mathfrak{h})$-invariant, since
this property have both $\mathfrak{p}_3$ and $\mathfrak{g}_i$ (as an ideal in $\mathfrak{g}$).
On the other hand, $\mathfrak{g}_i\cap \mathfrak{p}_3 \subset \mathfrak{p}_3$ and
$\mathfrak{p}_3$ is $\ad(\mathfrak{h})$-irreducible. Therefore, $\mathfrak{g}_i\cap \mathfrak{p}_3 = \mathfrak{p}_3$ and  $\mathfrak{p}_3 \subset \mathfrak{g}_i$.

If $\mathfrak{g}_i\neq \varphi_i(\mathfrak{h})+ \mathfrak{p}_1 + \mathfrak{p}_2$, then we get $\mathfrak{p}_3 \subset \mathfrak{g}_i$ again,
because an $\langle\boldsymbol{\cdot}\,,\boldsymbol{\cdot}\rangle$-orthogonal complement to $\varphi_i(\mathfrak{h})$ in $\mathfrak{g}_i$
is a subset of $\mathfrak{p}$.

Now, suppose that $[\mathfrak{p}_1,\mathfrak{p}_2]=0$ and $\mathfrak{g}_i= \varphi_i(\mathfrak{h})+ \mathfrak{p}_1 + \mathfrak{p}_2$.
Note that $[\varphi_i(\mathfrak{h}),\mathfrak{p}_1]\subset \mathfrak{p}_1$ ($[Y,X]=[\varphi_i(Y),X] \subset \mathfrak{p}_1$ for every
$Y\in \mathfrak{h}$ and every $X\in \mathfrak{p}_1 \subset \mathfrak{g}_i$), $[\mathfrak{p}_2,\mathfrak{p}_1]=0$, and
$[\mathfrak{p}_1,\mathfrak{p}_1]\subset \mathfrak{h}\cap \mathfrak{g}_i$,
hence $[\mathfrak{g}_i,\mathfrak{p}_1] \subset
[\mathfrak{p}_1,\mathfrak{p}_1]+\mathfrak{p}_1$ and  $[\mathfrak{g}_i,[\mathfrak{p}_1,\mathfrak{p}_1]] \subset
[\mathfrak{p}_1,[\mathfrak{p}_1,\mathfrak{p}_1]+ \mathfrak{p}_1]\subset [\mathfrak{p}_1,\mathfrak{p}_1]+ \mathfrak{p}_1$ (by the Jacoby equality).
Therefore, $[\mathfrak{p}_1,\mathfrak{p}_1]+ \mathfrak{p}_1$ is a proper ideal in $\mathfrak{g}_i$, that is impossible.

The last assertion of the lemma is obvious.
\end{proof}

\begin{lem}\label{struc4} If $A=0$, then $G/H$ is locally a direct product of three  irreducible symmetric spaces of compact type.
A simply connected $G/H$ with $A=0$ is a direct product of three  irreducible symmetric spaces of compact type.
\end{lem}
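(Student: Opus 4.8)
The plan is to show first that $A=0$ forces the brackets $[\mathfrak{p}_i,\mathfrak{p}_j]$ to vanish for all pairs $i\neq j$, and then to conclude that the three subalgebras $\mathfrak{k}_i=\mathfrak{h}\oplus\mathfrak{p}_i$ together carry the whole product structure. Recall from \eqref{constwz}--\eqref{consta} that $A=[123]=\sum_{\alpha,\beta,\gamma}\langle[e_1^\alpha,e_2^\beta],e_3^\gamma\rangle^2$. Since $\langle\cdot\,,\cdot\rangle$ is positive definite, $A=0$ is equivalent to $\langle[\mathfrak{p}_1,\mathfrak{p}_2],\mathfrak{p}_3\rangle=0$; but by \eqref{decom1}--\eqref{decom2} (more precisely by the relation $[\mathfrak{p}_j,\mathfrak{p}_k]\subset\mathfrak{p}_i$ for pairwise distinct indices derived at the start of Section \ref{waau}) we have $[\mathfrak{p}_1,\mathfrak{p}_2]\subset\mathfrak{p}_3$, and since $[\mathfrak{p}_1,\mathfrak{p}_2]$ is $\langle\cdot\,,\cdot\rangle$-orthogonal to $\mathfrak{p}_3$ it must be zero. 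By symmetry of $[ijk]$ in its indices the same computation gives $[\mathfrak{p}_1,\mathfrak{p}_3]=[\mathfrak{p}_2,\mathfrak{p}_3]=0$.

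Next I would assemble the direct sum decomposition of $\mathfrak{g}$. Set $\mathfrak{g}^{(i)}:=[\mathfrak{p}_i,\mathfrak{p}_i]\oplus\mathfrak{p}_i$ for $i=1,2,3$. Using $[\mathfrak{p}_i,\mathfrak{p}_j]=0$ for $i\neq j$ together with the Jacobi identity, one checks that $[\mathfrak{p}_i,\mathfrak{p}_i]$ commutes with $\mathfrak{p}_j$ for $j\neq i$, hence $\mathfrak{g}^{(i)}$ and $\mathfrak{g}^{(j)}$ commute for $i\neq j$; moreover each $\mathfrak{g}^{(i)}$ is a subalgebra (indeed an ideal of $\mathfrak{k}_i$, and since the three pieces commute and $\mathfrak h=\sum_i[\mathfrak p_i,\mathfrak p_i]$ plus possibly a piece acting trivially, one gets that $\mathfrak g^{(i)}$ is an ideal of $\mathfrak g$). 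The sum $\mathfrak{g}^{(1)}+\mathfrak{g}^{(2)}+\mathfrak{g}^{(3)}$ contains $\mathfrak{p}_1\oplus\mathfrak{p}_2\oplus\mathfrak{p}_3=\mathfrak{p}$ and contains $[\mathfrak{p}_i,\mathfrak{p}_i]$ for all $i$; since $\mathfrak{g}$ is semisimple and $G/H$ is almost effective, $\mathfrak{h}$ contains no nonzero ideal of $\mathfrak{g}$, and one argues that $\mathfrak{h}=[\mathfrak{p}_1,\mathfrak{p}_1]\oplus[\mathfrak{p}_2,\mathfrak{p}_2]\oplus[\mathfrak{p}_3,\mathfrak{p}_3]$ (any $\ad(\mathfrak h)$-complement would act trivially on $\mathfrak p$, hence be a central ideal of $\mathfrak g$ in $\mathfrak h$, hence zero by semisimplicity and almost effectiveness). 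Therefore $\mathfrak{g}=\mathfrak{g}^{(1)}\oplus\mathfrak{g}^{(2)}\oplus\mathfrak{g}^{(3)}$ as a direct sum of ideals, and the pair $(\mathfrak g^{(i)},[\mathfrak p_i,\mathfrak p_i])$ is the irreducible symmetric pair $(\mathfrak{k}_i,\mathfrak{h})$ restricted to the $i$-th block — irreducible because $\mathfrak p_i$ is $\Ad(H)$-irreducible, and symmetric because $[\mathfrak p_i,\mathfrak p_i]\subset\mathfrak h$ and $[\mathfrak p_i,\mathfrak p_i\cap\mathfrak g^{(i)}]\subset\mathfrak p_i$ with $[\mathfrak p_i,[\mathfrak p_i,\mathfrak p_i]]\subset\mathfrak p_i$. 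This gives the local statement: $G/H$ is locally $G^{(1)}/H^{(1)}\times G^{(2)}/H^{(2)}\times G^{(3)}/H^{(3)}$ with each factor an irreducible symmetric space of compact type.

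For the global statement, assume $G/H$ simply connected. Then $H$ is connected (as noted in the first Remark), and the simply connected cover of the isometry decomposition splits: letting $G^{(i)}$ be the connected subgroup with Lie algebra $\mathfrak g^{(i)}$, simple connectivity of $G/H$ forces $G=G^{(1)}\times G^{(2)}\times G^{(3)}$ with each $G^{(i)}$ simply connected and $H=H^{(1)}\times H^{(2)}\times H^{(3)}$, so that $G/H=\prod_i G^{(i)}/H^{(i)}$ is literally a direct product of three irreducible symmetric spaces of compact type. Finally the values $A=a_1=a_2=a_3=0$ in Theorem \ref{main}, case 1) are immediate from $A=0$ and \eqref{a123}.

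The main obstacle I anticipate is the bookkeeping needed to justify $\mathfrak h=\bigoplus_i[\mathfrak p_i,\mathfrak p_i]$ rather than $\mathfrak h$ being strictly larger: one must rule out a central or semisimple summand of $\mathfrak h$ that acts trivially on all of $\mathfrak p$, using almost effectiveness (no nonzero ideal of $\mathfrak g$ inside $\mathfrak h$) together with semisimplicity of $\mathfrak g$. Once that is in hand, the ideal decomposition of $\mathfrak g$ and the passage to the simply connected group are standard.
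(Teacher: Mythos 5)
Your proof is correct, but it takes a genuinely different and more self-contained route than the paper. The paper disposes of the local statement in one line by citing the equivalence ``$A=0$ if and only if $G/H$ is locally a product of three compact irreducible symmetric spaces'' from \cite[Theorem 2]{Lomshakov2}, and of the global statement by citing the fact that a complete, simply connected, locally symmetric space is symmetric \cite[Theorem 5.6]{Helgason1978}. You instead reprove the cited local result from scratch: $A=0$ forces $[\mathfrak{p}_i,\mathfrak{p}_j]=0$ for $i\neq j$ (using $[\mathfrak{p}_i,\mathfrak{p}_j]\subset\mathfrak{p}_k$ and positive definiteness of $-B$); the complement of $\sum_i[\mathfrak{p}_i,\mathfrak{p}_i]$ in $\mathfrak{h}$ acts trivially on $\mathfrak{p}$, hence is an ideal of $\mathfrak{g}$ inside $\mathfrak{h}$ and vanishes by almost effectiveness; and then $\mathfrak{g}=\bigoplus_i\bigl([\mathfrak{p}_i,\mathfrak{p}_i]\oplus\mathfrak{p}_i\bigr)$ is a sum of commuting ideals carrying irreducible symmetric pairs. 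The steps you leave implicit do all close: the summands $[\mathfrak{p}_i,\mathfrak{p}_i]$ are pairwise $B$-orthogonal (since $\langle[\mathfrak{p}_i,\mathfrak{p}_i],[\mathfrak{p}_j,\mathfrak{p}_j]\rangle=\langle\mathfrak{p}_i,[\mathfrak{p}_i,[\mathfrak{p}_j,\mathfrak{p}_j]]\rangle=0$), so the sum is direct; $[\mathfrak{p}_i,\mathfrak{p}_i]\neq 0$ because otherwise $\mathfrak{p}_i$ would be an abelian ideal of the semisimple $\mathfrak{g}$; and $\ad(\mathfrak{h})|_{\mathfrak{p}_i}=\ad([\mathfrak{p}_i,\mathfrak{p}_i])|_{\mathfrak{p}_i}$, so irreducibility of each symmetric pair follows from $\ad(\mathfrak{h})$-irreducibility of $\mathfrak{p}_i$. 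What your approach buys is independence from the external reference; what the paper's buys is brevity and, via the quoted equivalence, the converse implication as well.

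One imprecision in your global step: simple connectivity of $G/H$ does not force $G$ itself to split as $G^{(1)}\times G^{(2)}\times G^{(3)}$ --- only the universal cover $\widetilde{G}$ splits, and $G$ may be a nontrivial central quotient of it. The correct formulation is that $G/H\cong\widetilde{G}/\widetilde{H}$ with $\widetilde{H}$ connected (because $G/H$ is simply connected), and $\widetilde{G}/\widetilde{H}$ splits as the product of the factors $\widetilde{G}^{(i)}/\widetilde{H}^{(i)}$. This is cosmetic and does not affect the conclusion; the paper's appeal to \cite[Theorem 5.6]{Helgason1978} sidesteps the point entirely.
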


\begin{proof}
It is known that $A=0$ if and only if the space $G/H$ is locally a direct product of three compact irreducible symmetric spaces
(see  \cite[Theorem 2]{Lomshakov2}). Finally, we remind that complete (in particular, homogeneous) and simply connected locally symmetric space
is a symmetric space, see e.~g. \cite[Theorem 5.6]{Helgason1978}. Hence we get the lemma.
\end{proof}

\begin{cor}\label{struc5} If $p \geq 2$, then $A=0$, consequently, $G/H$ locally is a direct product of three  irreducible symmetric spaces of compact type.
\end{cor}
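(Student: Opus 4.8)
The plan is to derive this quickly from the three preceding lemmas: it suffices to show that $p\ge 2$ forces $A=0$, and then to quote Lemma~\ref{struc4}. So assume $p\ge 2$. Exactly as in the proof of Lemma~\ref{struc1}, for each $i\le p$ the $\langle\boldsymbol{\cdot}\,,\boldsymbol{\cdot}\rangle$-orthogonal complement of $\varphi_i(\mathfrak{h})$ in $\mathfrak{g}_i$ is a nonzero $\ad(\mathfrak{h})$-invariant subspace of $\mathfrak{p}$, hence it contains at least one of the three irreducible modules $\mathfrak{p}_1,\mathfrak{p}_2,\mathfrak{p}_3$. Applying this to $i=1$ and $i=2$ we obtain $\mathfrak{p}_a\subseteq\mathfrak{g}_1$ and $\mathfrak{p}_b\subseteq\mathfrak{g}_2$ for some indices $a,b$.

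Now $\mathfrak{g}_1$ and $\mathfrak{g}_2$ are distinct simple ideals of $\mathfrak{g}$, so $\mathfrak{g}_1\cap\mathfrak{g}_2=\{0\}$ and $[\mathfrak{g}_1,\mathfrak{g}_2]=\{0\}$. The first relation forces $a\neq b$, since a nonzero subspace cannot lie in two ideals with trivial intersection; the second gives $[\mathfrak{p}_a,\mathfrak{p}_b]\subseteq[\mathfrak{g}_1,\mathfrak{g}_2]=\{0\}$. Writing $\{a,b,c\}=\{1,2,3\}$ and using that the symbols $[ijk]$ are symmetric in all three indices, we get
$$
A=[123]=[abc]=\sum_{\alpha,\beta,\gamma}\big\langle[e_a^\alpha,e_b^\beta],e_c^\gamma\big\rangle^2=0 .
$$
By Lemma~\ref{struc4} this means precisely that $G/H$ is locally a direct product of three irreducible symmetric spaces of compact type, which is the assertion of the corollary.

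The conceptual content has already been expended in Lemmas~\ref{struc1}--\ref{struc4}, in particular in the characterization ``$A=0$ $\Leftrightarrow$ locally a product of three symmetric spaces''; this corollary is just their combination, so no real obstacle remains. The one point that must be handled with care is the step that turns the hypothesis $p\ge2$ into the existence of \emph{two of the three} modules $\mathfrak{p}_a,\mathfrak{p}_b$ lying in \emph{two distinct} simple ideals of $\mathfrak{g}$: here one reuses the reasoning from the proofs of Lemmas~\ref{struc1} and~\ref{struc3} (each intersection $\mathfrak{g}_i\cap\mathfrak{p}_j$ is an $\ad(\mathfrak{h})$-invariant subspace of the irreducible module $\mathfrak{p}_j$, hence equals $\{0\}$ or $\mathfrak{p}_j$), together with the elementary fact that distinct simple ideals of a semisimple Lie algebra commute, which is exactly what makes $[\mathfrak{p}_a,\mathfrak{p}_b]=\{0\}$ and therefore $A=0$.
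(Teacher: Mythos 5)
Your argument is correct and is essentially the paper's own proof: from $p\ge 2$ you place two of the modules $\mathfrak{p}_a,\mathfrak{p}_b$ inside the distinct commuting ideals $\mathfrak{g}_1,\mathfrak{g}_2$ (the paper gets this by citing Lemma~\ref{struc3} together with the observation already made in the proof of Lemma~\ref{struc1}), conclude $[\mathfrak{p}_a,\mathfrak{p}_b]=0$ hence $A=0$, and finish with Lemma~\ref{struc4}. The only difference is that you spell out the intermediate step that the paper compresses into a reference to Lemma~\ref{struc3}.
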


\begin{proof}
By Lemma \ref{struc3} we get that one of the modules  $\mathfrak{p}_1$, $\mathfrak{p}_2$, $\mathfrak{p}_3$  is in $\mathfrak{g}_1$ and the second
one is in~$\mathfrak{g}_2$. Hence, $[\mathfrak{p}_1,\mathfrak{p}_2]=0$ and $A=0$. Now, it suffices to apply Lemma \ref{struc4}.
\end{proof}

\begin{lem}\label{struc6} If $p=1$, then  $s=1$ and the Lie algebra $\mathfrak{g}=\mathfrak{g}_1$ is simple.
\end{lem}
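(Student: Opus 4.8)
The plan is to show that if $p = 1$, then the space $G/H$ cannot ``spread'' across several simple ideals of $\mathfrak{g}$, forcing $s = 1$. Suppose for contradiction that $s \geq 2$. By the rearrangement of indices, $p = 1$ means $\varphi_1(\mathfrak{h}) \neq \mathfrak{g}_1$, while $\varphi_j(\mathfrak{h}) = \mathfrak{g}_j$ for all $j \geq 2$. I would first record the structural consequences of $p = 1$ using Lemma~\ref{struc3}: since two of the $\mathfrak{p}_i$ lying in a common $\mathfrak{g}_i$ would force $p = 1$ only via that specific $\mathfrak{g}_i$, and more importantly, if $p = 1$ arises in a genuinely nontrivial way, at most one of $\mathfrak{p}_1, \mathfrak{p}_2, \mathfrak{p}_3$ can be contained in $\mathfrak{g}_1$ (otherwise all three are, by Lemma~\ref{struc3}, and then $\mathfrak{g}_1 \supset \mathfrak{p} \cap \mathfrak{g}_1$ together with $\varphi_1(\mathfrak{h})$ would already account for everything nontrivial, isolating $\mathfrak{g}_2 \oplus \cdots \oplus \mathfrak{g}_s$ as a factor with $\varphi_j(\mathfrak{h}) = \mathfrak{g}_j$ — meaning $\mathfrak{g}_2 \oplus \cdots \oplus \mathfrak{g}_s \subset \mathfrak{h}$ as an ideal, contradicting almost-effectiveness, unless $s=1$).

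The heart of the argument: consider $\mathfrak{g}_j$ for some $j \geq 2$, so $\varphi_j(\mathfrak{h}) = \mathfrak{g}_j$. I want to show $\mathfrak{g}_j \subset \mathfrak{h}$, which contradicts almost-effectiveness. The obstruction to $\mathfrak{g}_j \subset \mathfrak{h}$ is exactly the presence of some module $\mathfrak{p}_i$ having nonzero projection to $\mathfrak{g}_j$. Suppose $\mathfrak{p}_i$ has nonzero $\langle\cdot,\cdot\rangle$-projection onto $\mathfrak{g}_j$ for some $i \in \{1,2,3\}$. Since $\mathfrak{g}_j$ is an ideal, the projection $\mathfrak{p}_i \cap$-component behaves well; more precisely, I would argue that $\mathfrak{p}_i \cap \mathfrak{g}_j$ is $\ad(\mathfrak{h})$-invariant (being the intersection of the $\ad(\mathfrak{h})$-invariant ideal $\mathfrak{g}_j$ with the $\ad(\mathfrak{h})$-invariant module $\mathfrak{p}_i$), hence by irreducibility of $\mathfrak{p}_i$ either $\mathfrak{p}_i \cap \mathfrak{g}_j = 0$ or $\mathfrak{p}_i \subset \mathfrak{g}_j$. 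In the former case, I instead look at whether the projection of $\mathfrak{p}_i$ to $\mathfrak{g}_j$ is nonzero while $\mathfrak{p}_i \not\subset \mathfrak{g}_j$; then $\mathfrak{p}_i$ must also project nontrivially to some other $\mathfrak{g}_{j'}$, but since $\mathfrak{p}_i$ is $\ad(\mathfrak{h})$-irreducible and $\varphi_j|_{\mathfrak{p}_i}$ is an $\ad(\mathfrak{h})$-equivariant map onto its image (an $\ad(\mathfrak{h})$-submodule of the $\ad(\mathfrak{g}_j) = \ad(\varphi_j(\mathfrak{h}))$-module $\mathfrak{g}_j$), I can compare how $\mathfrak{h}$ acts. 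The key point is that $\varphi_j(\mathfrak{h}) = \mathfrak{g}_j$ means $\mathfrak{h}$ acts on $\mathfrak{g}_j$ with the adjoint action of $\mathfrak{g}_j$ on itself; combined with $[\mathfrak{p}_i, \mathfrak{p}_i] \subset \mathfrak{h}$ and tracking the $\mathfrak{g}_j$-components, one derives that $\varphi_j(\mathfrak{p}_i) + [\varphi_j(\mathfrak{p}_i), \varphi_j(\mathfrak{p}_i)]$ is a nonzero ideal in $\mathfrak{g}_j$, hence all of $\mathfrak{g}_j$, which I can then use together with the irreducibility constraints and Corollary~\ref{struc2} to reach a contradiction — essentially all three modules would have to interact with $\mathfrak{g}_j$, pushing them into $\mathfrak{g}_j$ by Lemma~\ref{struc3}, and then the remaining ideals lie in $\mathfrak{h}$.

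Concretely, the cleanest route: if $s \geq 2$ and $p = 1$, pick $j \geq 2$. Every $\mathfrak{p}_i$ either lies in $\mathfrak{g}_1$ or, if it meets $\mathfrak{g}_j$, must lie in $\mathfrak{g}_j$ by the irreducibility argument above (applied ideal by ideal: $\mathfrak{p}_i = \bigoplus_k (\mathfrak{p}_i \cap \mathfrak{g}_k)$ would follow if each $\mathfrak{p}_i \cap \mathfrak{g}_k$ exhausts the projection, which holds when $\varphi_k(\mathfrak{h}) = \mathfrak{g}_k$ gives no room for a ``diagonal'' submodule — here I use that a submodule of $\mathfrak{g}_k$ under $\ad(\varphi_k(\mathfrak{h})) = \ad(\mathfrak{g}_k)$ is an ideal, so either $0$ or $\mathfrak{g}_k$). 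Thus each $\mathfrak{p}_i$ is contained in a single simple ideal. Since there are only three modules and by Lemma~\ref{struc3} two of them in a common ideal force all three there, the ideals $\mathfrak{g}_j$ with $j \geq 2$ that contain no $\mathfrak{p}_i$ satisfy $\mathfrak{g}_j = \varphi_j(\mathfrak{h}) \subset \mathfrak{h}$, contradicting almost-effectiveness; and if some $\mathfrak{g}_j$ ($j \geq 2$) contains a $\mathfrak{p}_i$, then either $\varphi_j(\mathfrak{h}) \ne \mathfrak{g}_j$ (contradicting $j > p$) or one runs the Jacobi-identity ideal argument from the proof of Lemma~\ref{struc3} inside $\mathfrak{g}_j$. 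Hence $s = 1$. I expect the main obstacle to be handling the possibility that a module $\mathfrak{p}_i$ projects nontrivially but not injectively or not surjectively onto several ideals simultaneously — ruling out ``diagonal'' embeddings of $\mathfrak{p}_i$ across ideals — which is exactly where the hypothesis $\varphi_j(\mathfrak{h}) = \mathfrak{g}_j$ for $j > p$ must be exploited carefully, since it pins down the $\mathfrak{h}$-action on each such $\mathfrak{g}_j$ as the full adjoint action and thereby forbids nontrivial proper invariant submodules.
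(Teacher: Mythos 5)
Your reduction has two sound pieces: if all three $\mathfrak{p}_i$ lie in $\mathfrak{g}_1$, then $\bigoplus_{j\ge 2}\mathfrak{g}_j\perp\mathfrak{p}$ forces $\bigoplus_{j\ge2}\mathfrak{g}_j\subset\mathfrak{h}$; and a module $\mathfrak{p}_i$ contained in a single $\mathfrak{g}_j$ with $j\ge2$ would be a nonzero $\ad(\mathfrak{h})$-invariant subspace, hence a nonzero ideal, hence all of the simple algebra $\mathfrak{g}_j$, contradicting $[\mathfrak{p}_i,\mathfrak{p}_i]\subset\mathfrak{h}$ and effectiveness. But the step you yourself flag as the main obstacle --- ruling out diagonal embeddings --- is exactly where the argument breaks, and the tool you invoke does not do the job. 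That $\varphi_j(\mathfrak{h})=\mathfrak{g}_j$ only tells you that an $\ad(\mathfrak{h})$-invariant subspace \emph{of the single ideal} $\mathfrak{g}_j$ is $0$ or $\mathfrak{g}_j$, and hence that $\varphi_j(\mathfrak{p}_i)\in\{0,\mathfrak{g}_j\}$. It does not forbid $\mathfrak{p}_i$ from sitting diagonally inside $\mathfrak{g}_j\oplus\mathfrak{g}_{j'}$, meeting each factor trivially while surjecting onto both: the antidiagonal $\{(X,-X)\}\subset\mathfrak{f}\oplus\mathfrak{f}$ under $\ad(\diag(\mathfrak{f}))$ is irreducible and is precisely such a module (this is the mechanism behind Lemma \ref{struc7} and the Ledger--Obata spaces). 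So your conclusion that ``each $\mathfrak{p}_i$ is contained in a single simple ideal'' is false, and with it the chain ``the remaining ideals lie in $\mathfrak{h}$.'' The related claim that ``all three modules would have to interact with $\mathfrak{g}_j$, pushing them into $\mathfrak{g}_j$ by Lemma \ref{struc3}'' also misuses that lemma, which requires two modules to be \emph{contained} in $\mathfrak{g}_j$, not merely to project onto it.

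To see the gap concretely, take $\mathfrak{g}=so(5)\oplus su(2)\oplus su(2)\oplus su(2)\oplus su(2)$ with $\mathfrak{h}=so(4)\oplus\diag(su(2))\oplus\diag(su(2))$ (i.e. $S^4\times SU(2)\times SU(2)$): this is a generalized Wallach space with $p=1$ and $s=5$, all $\varphi_j(\mathfrak{h})=\mathfrak{g}_j$ for $j\ge2$, and yet no $\mathfrak{g}_j$ lies in $\mathfrak{h}$, because $\mathfrak{p}_2$ and $\mathfrak{p}_3$ are antidiagonal across pairs of ideals. This shows the lemma is really proved (and used in Theorem \ref{struc8}) under the standing assumption $A\neq 0$, and that an $A$-free argument such as yours cannot succeed. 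The paper's route is different in exactly the places your proposal is silent: it first pins $\mathfrak{p}_1$ inside $\mathfrak{g}_1$, uses the count of Lemma \ref{struc1} to reduce the possible diagonal configurations to $(v_1,v_2)=(2,1)$, $(3,1)$, $(2,2,1)$, and then eliminates each either by counting irreducible summands, or by showing $A=0$ (whereupon Lemma \ref{struc4} applies), or by the bracket contradiction $[\mathfrak{p}_1,\mathfrak{p}_2]\subset\mathfrak{g}_1$ versus $[\mathfrak{p}_1,\mathfrak{p}_2]\subset\mathfrak{p}_3$ with $\mathfrak{p}_3\cap\mathfrak{g}_1=0$. None of these ingredients appears in your proposal, so the proof is not recoverable from it as written.
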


\begin{proof}
Suppose the contrary, $s \geq 2$. Without loss of generality we may assume that $\mathfrak{p}_1 \subset \mathfrak{g}_1$.
Then by Lemma~\ref{struc3}, $\mathfrak{p}_2$ and $\mathfrak{p}_3$ are not subsets of $\mathfrak{g}_1$.
Hence, $\mathfrak{p}_1$ is an $\langle\boldsymbol{\cdot}\,,\boldsymbol{\cdot}\rangle$-orthogonal complement to $\varphi_1(\mathfrak{h})$ in $\mathfrak{g}_1$.
By definition of $p$, $\mathfrak{g}_i=\varphi_i(\mathfrak{h})$ for $2\leq i \leq s$. Hence, $u_i=\dim (\varphi_i(\mathbb{R}^l))=0$ for $i \geq 2$, and
$u=u_1=\dim (\varphi_1(\mathbb{R}^l))=l$.
Since $p=1$ and $u=l$ we get $\sum_{j=1}^m (v_j-1) \leq 2$ by Lemma \ref{struc1}.

If $v_j=1$ for $j=1,\dots,m$, then all $\mathfrak{g}_i=\varphi_i(\mathfrak{h})$ are ideals of  $\mathfrak{g}$ in $\mathfrak{h}$ for $2\leq i \leq s$,
that is impossible due to the effectiveness of the pair $(\mathfrak{g}, \mathfrak{h})$.

Since $v_1\geq v_2\geq v_3 \geq \cdots \geq v_m\geq 1$,
we should check the following possibilities: $(v_1,v_2)=(2,1)$, $(v_1,v_2)=(3,1)$, and $(v_1,v_2,v_3)=(2,2,1)$.
Note that all $\mathfrak{h}_j$ with $v_j=1$ are such that $\varphi_i(\mathfrak{h}_j)$ is trivial for $2\leq i \leq s$, otherwise
$\mathfrak{g}_i=\varphi_i(\mathfrak{h}_j)=\varphi_i(\mathfrak{h})$ are ideals of  $\mathfrak{g}$ in $\mathfrak{h}$.

The case $(v_1,v_2)=(2,1)$ is impossible, because $\mathfrak{p}$ in this case contains only two $\ad(\mathfrak{h})$-irreducible modules.

Let us consider the case $(v_1,v_2)=(3,1)$. If $\varphi_1(\mathfrak{h}_1)$ is trivial, then $[\mathfrak{p}_1,\mathfrak{p}_2\oplus \mathfrak{p}_3]=0$ and
$A=0$, that is impossible due to Lemma \ref{struc4}.
Hence, without loss of generality we may assume that $a^1_1=a^1_2=a^1_3=1$ and  $a^1_i=0$ for $i \geq 4$.
Then, $\mathfrak{p}_2\oplus \mathfrak{p}_3$ should coincide with an
$\langle\boldsymbol{\cdot}\,,\boldsymbol{\cdot}\rangle$-orthogonal complement to $\diag(\mathfrak{h}_1)$ in
$\varphi_1(\mathfrak{h}_1)\oplus \varphi_2(\mathfrak{h}_1)\oplus \varphi_3(\mathfrak{h}_1)\simeq 3\mathfrak{h}_1$.
But $[\mathfrak{p}_1,\mathfrak{p}_2]\subset \mathfrak{g}_1$ (since $\mathfrak{g}_1$ is an ideal in $\mathfrak{g}$), that
contradicts to $[\mathfrak{p}_1,\mathfrak{p}_2] \subset \mathfrak{p}_3$.

Finally, consider the case $(v_1,v_2,v_3)=(2,2,1)$. If $\varphi_1(\mathfrak{h}_1)$ or $\varphi_1(\mathfrak{h}_2)$ is trivial,
then $[\mathfrak{p}_1,\mathfrak{p}_2]=0$ or $[\mathfrak{p}_1,\mathfrak{p}_3]=0$ which implies
$A=0$, that is impossible due to Lemma \ref{struc4}.
Hence, without loss of generality we may assume that $a^1_1=a^1_2=1$,  $a^1_i=0$ for $i \geq 3$, $a^2_1=a^2_3=1$,  $a^3_i=0$ for other $i$.
Further,  without loss of generality, $\mathfrak{p}_2$ is an
$\langle\boldsymbol{\cdot}\,,\boldsymbol{\cdot}\rangle$-orthogonal complement to $\diag(\mathfrak{h}_1)$ in
$\varphi_1(\mathfrak{h}_1)\oplus \varphi_2(\mathfrak{h}_1)\simeq 2\mathfrak{h}_1$ and
$\mathfrak{p}_3$ is an
$\langle\boldsymbol{\cdot}\,,\boldsymbol{\cdot}\rangle$-orthogonal complement to $\diag(\mathfrak{h}_2)$ in
$\varphi_1(\mathfrak{h}_2)\oplus \varphi_3(\mathfrak{h}_2)\simeq 2\mathfrak{h}_2$.
As in the previous case, $[\mathfrak{p}_1,\mathfrak{p}_2]\subset \mathfrak{g}_1$,  that
contradicts to $[\mathfrak{p}_1,\mathfrak{p}_2] \subset \mathfrak{p}_3$. The lemma is proved.
\end{proof}

\begin{lem}\label{struc7} If $p=0$, then either $A=0$ or
$(\mathfrak{g}, \mathfrak{h})= (\mathfrak{f}\oplus \mathfrak{f}\oplus\mathfrak{f}\oplus\mathfrak{f},\diag(\mathfrak{f})=\{(X,X,X,X)\,|\, X\in f\})$
for a simple compact Lie algebra $\mathfrak{f}$.\! Moreover, up to permutation, we have
$\mathfrak{p}_1\!=\!\{(X,X,-X,-X)\,|\,X\in f\}$,
$\mathfrak{p}_2=\{(X,-X,X,-X)\,|\, X\in f\}$, and
$\mathfrak{p}_3=\{(X,-X,-X,X)\,|\, X\in f\}$.

\end{lem}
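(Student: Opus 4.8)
The plan is: assume $p=0$; if $A=0$ there is nothing to prove, so assume $A\neq 0$ and deduce that $(\mathfrak{g},\mathfrak{h})$ is the quadruple diagonal. I would organize the argument in three stages — first distilling from $p=0$ a rigid block description of the pair, then using $A\neq 0$ to collapse everything to one block with four simple summands, and finally pinning down the modules $\mathfrak{p}_i$.

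\emph{Stage 1 (consequences of $p=0$).} Here $\varphi_j\colon\mathfrak{h}\to\mathfrak{g}_j$ is surjective for every $j$. Then $\mathfrak{g}_j\cap\mathfrak{h}$ is an ideal of the simple algebra $\mathfrak{g}_j$ contained in $\mathfrak{h}$, hence $0$ by almost effectiveness; $\varphi_j(\mathbb{R}^l)$ is an abelian ideal of $\mathfrak{g}_j$, hence $0$, so $l=0$ and $\mathfrak{h}=\mathfrak{h}_1\oplus\cdots\oplus\mathfrak{h}_m$ is semisimple; and each $\varphi_j(\mathfrak{h}_i)$ is an ideal of $\mathfrak{g}_j$, hence $0$ or $\mathfrak{g}_j$. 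Since $\sum_i\varphi_j(\mathfrak{h}_i)=\varphi_j(\mathfrak{h})=\mathfrak{g}_j\neq 0$, at least one $\varphi_j(\mathfrak{h}_i)$ equals $\mathfrak{g}_j$, and not two (if $\varphi_j(\mathfrak{h}_i)=\varphi_j(\mathfrak{h}_{i'})=\mathfrak{g}_j$ with $i\neq i'$ then $\mathfrak{g}_j=[\mathfrak{g}_j,\mathfrak{g}_j]=\varphi_j([\mathfrak{h}_i,\mathfrak{h}_{i'}])=0$). So for each $j$ there is a unique index $\iota(j)$ with $\varphi_j(\mathfrak{h}_{\iota(j)})=\mathfrak{g}_j$; then $\varphi_j$ restricts to an isomorphism $\mathfrak{h}_{\iota(j)}\to\mathfrak{g}_j$, while $\varphi_j(\mathfrak{h}_i)=0$ for $i\neq\iota(j)$.

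\emph{Stage 2 (module picture and the use of $A$).} Put $B_i=\iota^{-1}(i)$, $v_i=|B_i|$, so $s=\sum_i v_i$. On $\bigoplus_{j\in B_i}\mathfrak{g}_j$ the ideal $\mathfrak{h}_i$ acts, through the isomorphisms $\varphi_j$ $(j\in B_i)$, as $v_i$ copies of its adjoint representation $V_i:=\ad(\mathfrak{h}_i)$, while every $\mathfrak{h}_{i'}$ with $i'\neq i$ acts there trivially; hence $\mathfrak{g}\cong\bigoplus_i V_i^{\oplus v_i}$ as an $\mathfrak{h}$-module, the $V_i$ being pairwise non-isomorphic irreducibles (each $\mathfrak{h}_i$ acts nontrivially on $V_i$ and trivially on $V_{i'}$ for $i'\neq i$). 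As $\mathfrak{h}\cong\bigoplus_i V_i$, the orthogonal complement gives $\mathfrak{p}\cong\bigoplus_i V_i^{\oplus(v_i-1)}$, so $\mathfrak{p}$ has exactly $\sum_i(v_i-1)$ irreducible summands, i.e.\ $\sum_i(v_i-1)=3$; and $v_i\geq 2$ for all $i$ (if $v_i=1$, $B_i=\{j\}$, then $\mathfrak{h}_i=\mathfrak{g}_j$ is a nonzero ideal of $\mathfrak{g}$ lying in $\mathfrak{h}$, impossible). Since the symbol $[ijk]$ is symmetric and $[\mathfrak{p}_i,\mathfrak{p}_j]\subset\mathfrak{p}_k$ for distinct indices, $A\neq 0$ is equivalent to $[\mathfrak{p}_a,\mathfrak{p}_b]\neq 0$ for all $a\neq b$. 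Each $\mathfrak{p}_r$ is $\cong V_{c(r)}$ for some $c(r)$, and, no other block containing a copy of $V_{c(r)}$, we get $\mathfrak{p}_r\subset\bigoplus_{j\in B_{c(r)}}\mathfrak{g}_j$; if two of the modules lay in different blocks, their bracket would lie inside the bracket of two sums of distinct simple ideals of $\mathfrak{g}$, namely $0$, contradicting $A\neq 0$. Hence all three $\mathfrak{p}_r$ lie in one block, say $B_1$; as $\mathfrak{p}\cong\bigoplus_i V_i^{\oplus(v_i-1)}$ with every $v_i-1\geq 1$, a block $i\neq 1$ would contribute a summand of $\mathfrak{p}$ outside $B_1$, which is impossible. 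Therefore $m=1$, $v_1-1=3$, so $v_1=s=4$ and $\mathfrak{g}_1\cong\mathfrak{g}_2\cong\mathfrak{g}_3\cong\mathfrak{g}_4\cong\mathfrak{h}=:\mathfrak{f}$.

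\emph{Stage 3 (identifying the modules).} Identifying each $\mathfrak{g}_j$ with $\mathfrak{f}$ via $\varphi_j^{-1}$ turns $\mathfrak{h}$ into $\diag(\mathfrak{f})=\{(X,X,X,X):X\in\mathfrak{f}\}$; these being Lie-algebra isomorphisms, the Killing form of $\mathfrak{g}_j$ goes to that of $\mathfrak{f}$, so $-B$ becomes the product inner product and $\mathfrak{p}=\{(X_1,X_2,X_3,X_4):X_1+X_2+X_3+X_4=0\}$. The irreducible $\mathfrak{h}$-submodules of $\mathfrak{p}$ are precisely the lines $\mathfrak{p}_c=\{(c_1X,\dots,c_4X):X\in\mathfrak{f}\}$ with $0\neq c\in\mathbb{R}^4$, $\sum_j c_j=0$; the requirement $[\mathfrak{p}_c,\mathfrak{p}_c]\subset\mathfrak{h}$ reads $(c_j^2[X,Y])_j\in\diag(\mathfrak{f})$, which forces $c_1^2=c_2^2=c_3^2=c_4^2$ (as $[\mathfrak{f},\mathfrak{f}]\neq 0$), i.e.\ $c$ is proportional to one of $(1,1,-1,-1)$, $(1,-1,1,-1)$, $(1,-1,-1,1)$ — the only $\pm1$-vectors with zero sum, up to sign — and these are mutually orthogonal and span the hyperplane $\sum_j c_j=0$. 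Since $\mathfrak{p}=\mathfrak{p}_1\oplus\mathfrak{p}_2\oplus\mathfrak{p}_3$ is an orthogonal sum of three such lines, $\{\mathfrak{p}_1,\mathfrak{p}_2,\mathfrak{p}_3\}$ is exactly this triple — the description in the statement — and a direct check shows this configuration has $A\neq 0$, consistent with the hypothesis. The part that needs care is the bookkeeping of Stage 1 together with the passage to $\mathfrak{g}\cong\bigoplus_i V_i^{\oplus v_i}$; the use of $A\neq 0$ in Stage 2 is the conceptual key, and Stage 3 is routine linear algebra.
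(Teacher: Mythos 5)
Your proof is correct and follows essentially the same route as the paper: derive $l=0$ and $v_i\ge 2$ with $\sum_i(v_i-1)=3$, then use $A\neq 0$ together with the vanishing of brackets between distinct simple ideals to force all three modules into a single diagonal block, and finally classify the invariant lines in $\mathfrak{p}\cong V^{\oplus 3}$ via the condition $[\mathfrak{p}_c,\mathfrak{p}_c]\subset\mathfrak{h}$. The only difference is organizational: where the paper runs an explicit case analysis over $m\in\{1,2,3\}$ and cites the Ledger--Obata structure theory for the final step, you replace the case analysis by a uniform isotypic-component argument and rederive the module description directly --- a slightly cleaner but equivalent packaging.
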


\begin{proof}
Since $p=0$, then
$u_i=\dim (\varphi_i(\mathbb{R}^l))=0$ for all $i$, and
$u=0=l$.
Since $p=0$ and $u=l$ we get $\sum_{j=1}^m (v_j-1) \leq 3$ by Lemma \ref{struc1}.
Since $\mathfrak{g}_i=\varphi_i(\mathfrak{h})$ for all $i$, then every $\mathfrak{g}_i$ is isomorphic to some simple Lie algebra
$\mathfrak{h}_j$.

If $v_j=1$ for some $j=1,\dots,m$, then all $\mathfrak{h}_j$ is an ideal of  $\mathfrak{g}$ in $\mathfrak{h}$
(indeed, there is exactly one $i\in 1,\dots,s$ with $a^j_i=1$, hence
$\mathfrak{g}_i=\varphi_i(\mathfrak{h}_j)=\varphi_i(\mathfrak{h})$),
that is impossible due to the effectiveness of the pair $(\mathfrak{g}, \mathfrak{h})$. Therefore, $v_j\geq 2$ for all $j=1,\dots,m$.
Since $\sum_{j=1}^m (v_j-1) \leq 3$, then we
we should check the following possibilities: $m=3$, $m=2$ and $m=1$.
\smallskip

If $m=3$, then $v_1=v_2=v_3=2$. It is easy to see, that (up to permutation)
$\mathfrak{p}_1$ is the
$\langle\boldsymbol{\cdot}\,,\boldsymbol{\cdot}\rangle$-orthogonal complement to $\diag(\mathfrak{h}_1)$ in
$\varphi_1(\mathfrak{h}_1)\oplus \varphi_2(\mathfrak{h}_1)\simeq 2\mathfrak{h}_1$,
$\mathfrak{p}_2$ is the
$\langle\boldsymbol{\cdot}\,,\boldsymbol{\cdot}\rangle$-orthogonal complement to $\diag(\mathfrak{h}_2)$ in
$\varphi_3(\mathfrak{h}_2)\oplus \varphi_4(\mathfrak{h}_2)\simeq 2\mathfrak{h}_2$ and
$\mathfrak{p}_3$ is the
$\langle\boldsymbol{\cdot}\,,\boldsymbol{\cdot}\rangle$-orthogonal complement to $\diag(\mathfrak{h}_3)$ in
$\varphi_5(\mathfrak{h}_3)\oplus \varphi_6(\mathfrak{h}_3)\simeq 2\mathfrak{h}_3$.
Obviously in this case we have $A=0$.
\smallskip

If $m=2$, then either $(v_1,v_2)=(2,2)$ or $(v_1,v_2)=(3,2)$.
The case $(v_1,v_2)=(2,2)$ is impossible, because $\mathfrak{p}$ in this case contains only two $\ad(\mathfrak{h})$-irreducible modules.
If $(v_1,v_2)=(3,2)$, then
$\mathfrak{p}_1\oplus \mathfrak{p}_2$ is the
$\langle\boldsymbol{\cdot}\,,\boldsymbol{\cdot}\rangle$-orthogonal complement to $\diag(\mathfrak{h}_1)$ in
$\varphi_1(\mathfrak{h}_1)\oplus \varphi_2(\mathfrak{h}_1)\oplus \varphi_3(\mathfrak{h}_1)\simeq 3\mathfrak{h}_1$,
and $\mathfrak{p}_3$ is the
$\langle\boldsymbol{\cdot}\,,\boldsymbol{\cdot}\rangle$-orthogonal complement to $\diag(\mathfrak{h}_2)$ in
$\varphi_4(\mathfrak{h}_2)\oplus \varphi_5(\mathfrak{h}_2)\simeq 2\mathfrak{h}_2$. Since $[\mathfrak{p}_1\oplus \mathfrak{p}_2,\mathfrak{p}_3]=0$,
we get $A=0$ (in fact, it is easy to prove that this variant is impossible at all).
\smallskip

If $m=1$, then we have $(\mathfrak{g}, \mathfrak{h})= (s\cdot \mathfrak{h}_1,\diag(\mathfrak{h}_1))$, and $G/H$ is a so-called Ledzer~--~Obata space, see
\cite[section 4]{LedgerObata} or \cite{Nikonorov3}. It should be noted also that for any compact Lie group $F$, a  Ledzer~--~Obata space
$F^s/\diag(F)$ is diffeomorphic to the Lie group $F^{s-1}$ \cite[P.~453]{LedgerObata}.

It is known, that the module $\mathfrak{p}$ decomposed in this case into the sum of $s-1$ pairwise $\ad(\mathfrak{h})$-isomorphic irreducible summand,
but such a decomposition is not unique, see \cite{Nikonorov3}.
Hence, $s=4$ and $(\mathfrak{g}, \mathfrak{h})= (\mathfrak{f}\oplus \mathfrak{f}\oplus\mathfrak{f}\oplus\mathfrak{f},\diag(\mathfrak{f}))$
for some simple Lie algebra $\mathfrak{f}(=\mathfrak{h}_1)$.

Clear that $\mathfrak{h}=\{(X,X,X,X)\,|\, X\in f\}$. Any $\ad(\mathfrak{h})$-irreducible module in $\mathfrak{p}$ has the form
$\mathfrak{q}=\{(\alpha_1X,\alpha_2X,\alpha_3X,\alpha_4X)\,|\, X\in f\}$, where $\alpha=(\alpha_1,\alpha_2,\alpha_3,\alpha_4)\in \mathbb{R}^4$
is of unit length and satisfies $\alpha_1+\alpha_2+\alpha_3+\alpha_4=0$.
Therefore, $\mathfrak{p}$ could be decomposed into a sum of $\ad(\mathfrak{h})$-irreducible and pairwise
$\langle\boldsymbol{\cdot}\,,\boldsymbol{\cdot}\rangle$-orthogonal modules only as follows (see details in \cite{Nikonorov3}):
$\mathfrak{p}=\mathfrak{p}_1\oplus \mathfrak{p}_2\oplus \mathfrak{p}_3$, where
\begin{eqnarray*}
\mathfrak{p}_1=\{(\alpha_1X,\alpha_2X,\alpha_3X,\alpha_4X)\,|\, X\in f\},\quad\quad
\mathfrak{p}_2=\{(\beta_1X,\beta_2X,\beta_3X,\beta_4X)\,|\, X\in f\},\\
\mathfrak{p}_3=\{(\gamma_1X,\,\gamma_2X,\,\gamma_3X,\,\gamma_4X)\,|\, X\in f\},\quad\quad\quad\quad\quad\quad\,\,
(\alpha,\alpha)=(\beta,\beta)=(\gamma,\gamma)=1,\\
\alpha_1+\alpha_2+\alpha_3+\alpha_4=0,\quad\quad\,\,\,\,\, \beta_1+\beta_2+\beta_3+\beta_4=0,\quad\quad\,\,\,\,\, \gamma_1+\gamma_2+\gamma_3+\gamma_4=0,\\
(\alpha,\beta)=(\alpha,\gamma)=(\beta,\gamma)=0,\,\, \mbox{where} \,\,\,(x,y)=x_1y_1+x_2y_2+x_3y_3+x_4y_4, \,\, x,y \in \mathbb{R}^4.
\end{eqnarray*}
Since $[\mathfrak{p}_i,\mathfrak{p}_i]\subset \mathfrak{h}$, $i=1,2,3$, then
$|\alpha_1|=|\alpha_2|=|\alpha_3|=|\alpha_4|=|\beta_1|=|\beta_2|=|\beta_3|=|\beta_4|=|\gamma_1|=|\gamma_2|=|\gamma_3|=|\gamma_4|=1/2$.
Therefore, up to permutation, we have $\alpha=(1/2,1/2,-1/2,-1/2)$, $\beta=(1/2,-1/2,1/2,-1/2)$, $\gamma=(1/2,-1/2,-1/2,1/2)$.
The lemma is proved.
\end{proof}
\smallskip

From the previous results of this section we immediately get

\begin{theorem}\label{struc8}
Let $G/H$ be a generalized Wallach space with connected $H$. Then one of the following assertions holds:

{\rm 1)} $G/H$ is locally a direct product of three irreducible symmetric spaces of compact type;

{\rm 2)} The group $G$ is simple;

{\rm 3)} On the Lie algebra level,
$(\mathfrak{g}, \mathfrak{h})= (\mathfrak{f}\oplus \mathfrak{f}\oplus\mathfrak{f}\oplus\mathfrak{f},\diag(\mathfrak{f})=\{(X,X,X,X)\,|\, X\in f\})$
for a simple compact Lie algebra $\mathfrak{f}$ and, up to permutation, we have
$\mathfrak{p}_1=\{(X,X,-X,-X)\,|\, X\in f\}$,
$\mathfrak{p}_2=\{(X,-X,X,-X)\,|\, X\in f\}$, and
$\mathfrak{p}_3=\{(X,-X,-X,X)\,|\, X\in f\}$.
\end{theorem}

\begin{pred}\label{struc9} Let $G/H$ be a generalized Wallach space such that
$(\mathfrak{g}, \mathfrak{h})= (\mathfrak{f}\oplus \mathfrak{f}\oplus\mathfrak{f}\oplus\mathfrak{f},\diag(\mathfrak{f})=\{(X,X,X,X)\,|\, X\in f\})$
for a simple compact Lie algebra $\mathfrak{f}$ and
$\mathfrak{p}_1=\{(X,X,-X,-X)\,|\, X\in f\}$,
$\mathfrak{p}_2=\{(X,-X,X,-X)\,|\, X\in f\}$, and
$\mathfrak{p}_3=\{(X,-X,-X,X)\,|\, X\in f\}$.
Then $A=\frac{1}{4}\dim(\mathfrak{f})$ and $a_1=a_2=a_3=\frac{1}{4}$.
\end{pred}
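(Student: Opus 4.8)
The plan is to compute $A=[123]$ directly from an explicit orthonormal basis adapted to the given decomposition. Since $\mathfrak{g}=\mathfrak{f}\oplus\mathfrak{f}\oplus\mathfrak{f}\oplus\mathfrak{f}$ as a sum of ideals, the Killing form $B$ of $\mathfrak{g}$ is the $\langle\boldsymbol{\cdot}\,,\boldsymbol{\cdot}\rangle$-orthogonal sum of the Killing forms of the four copies of $\mathfrak{f}$ (both the bracket and the trace defining $B$ are taken componentwise). Fix an orthonormal basis $\{f^\alpha\}_{\alpha=1}^{d}$, $d=\dim(\mathfrak{f})$, of $\mathfrak{f}$ with respect to $\langle\boldsymbol{\cdot}\,,\boldsymbol{\cdot}\rangle_{\mathfrak{f}}:=-B_{\mathfrak{f}}$, where $B_{\mathfrak{f}}$ is the Killing form of $\mathfrak{f}$. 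First I would check that
\[
e_1^\alpha=\tfrac12(f^\alpha,f^\alpha,-f^\alpha,-f^\alpha),\quad e_2^\alpha=\tfrac12(f^\alpha,-f^\alpha,f^\alpha,-f^\alpha),\quad e_3^\alpha=\tfrac12(f^\alpha,-f^\alpha,-f^\alpha,f^\alpha)
\]
are $\langle\boldsymbol{\cdot}\,,\boldsymbol{\cdot}\rangle$-orthonormal bases of $\mathfrak{p}_1,\mathfrak{p}_2,\mathfrak{p}_3$ respectively: each has squared norm $\tfrac14\cdot 4=1$ by the four equal contributions from the factors, and distinct indices give orthogonal vectors. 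In particular $d_1=d_2=d_3=d$.

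Next, since the bracket of $\mathfrak{g}$ is componentwise, a short sign bookkeeping gives, writing $Z:=[f^\alpha,f^\beta]\in\mathfrak{f}$,
\[
[e_1^\alpha,e_2^\beta]=\tfrac14\bigl(Z,-Z,-Z,Z\bigr)=\tfrac12\sum_{\gamma}\bigl\langle[f^\alpha,f^\beta],f^\gamma\bigr\rangle_{\mathfrak{f}}\,e_3^\gamma ,
\]
which is consistent with $[\mathfrak{p}_1,\mathfrak{p}_2]\subset\mathfrak{p}_3$. Hence $\bigl\langle[e_1^\alpha,e_2^\beta],e_3^\gamma\bigr\rangle=\tfrac12\bigl\langle[f^\alpha,f^\beta],f^\gamma\bigr\rangle_{\mathfrak{f}}$, and therefore
\[
A=[123]=\sum_{\alpha,\beta,\gamma}\bigl\langle[e_1^\alpha,e_2^\beta],e_3^\gamma\bigr\rangle^2=\frac14\sum_{\alpha,\beta,\gamma}\bigl\langle[f^\alpha,f^\beta],f^\gamma\bigr\rangle_{\mathfrak{f}}^2 .
\]

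It remains to evaluate this triple sum. Using skew-symmetry of $\ad$ with respect to $\langle\boldsymbol{\cdot}\,,\boldsymbol{\cdot}\rangle_{\mathfrak{f}}$, one has $\sum_{\gamma}\langle[f^\alpha,f^\beta],f^\gamma\rangle_{\mathfrak{f}}^2=\|[f^\alpha,f^\beta]\|^2=\langle\ad(f^\alpha)f^\beta,\ad(f^\alpha)f^\beta\rangle_{\mathfrak{f}}=-\langle f^\beta,\ad(f^\alpha)^2f^\beta\rangle_{\mathfrak{f}}$, and summing over $\beta$ yields $-\trace\bigl(\ad(f^\alpha)^2\bigr)=-B_{\mathfrak{f}}(f^\alpha,f^\alpha)=1$; summing over $\alpha$ gives $\sum_{\alpha,\beta,\gamma}\langle[f^\alpha,f^\beta],f^\gamma\rangle_{\mathfrak{f}}^2=d$. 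Thus $A=\tfrac14\dim(\mathfrak{f})$, and since $d_1=d_2=d_3=\dim(\mathfrak{f})$ we conclude $a_i=A/d_i=\tfrac14$ for $i\in\{1,2,3\}$. There is no real conceptual obstacle; the only points requiring care are the sign/factor-$\tfrac12$ bookkeeping in the componentwise bracket and the standard trace identity $\sum_\beta\|[f^\alpha,f^\beta]\|^2=1$ for a $(-B_{\mathfrak{f}})$-orthonormal basis, which is precisely where $\dim(\mathfrak{f})$ enters.
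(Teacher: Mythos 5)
Your proof is correct and follows essentially the same route as the paper: the same explicit $\langle\boldsymbol{\cdot}\,,\boldsymbol{\cdot}\rangle$-orthonormal bases $\tfrac12(f^\alpha,\pm f^\alpha,\pm f^\alpha,\pm f^\alpha)$ of the $\mathfrak{p}_i$, the same reduction of $[123]$ to $\tfrac14\sum_{\alpha,\beta,\gamma}\langle[f^\alpha,f^\beta],f^\gamma\rangle_{\mathfrak{f}}^2$, and the same evaluation of that sum via skew-symmetry of $\ad$ and the identity $-\trace(\ad(f^\alpha)^2)=-B_{\mathfrak{f}}(f^\alpha,f^\alpha)=1$. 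The only cosmetic difference is that you first expand $[e_1^\alpha,e_2^\beta]$ in the $e_3^\gamma$ basis, whereas the paper computes the inner product componentwise in $\mathfrak{g}$; the bookkeeping constants agree.
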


\begin{proof}
Let $e_i$, $i=1,\dots,\dim(\mathfrak{f})$, be an orthonormal with respect to $-B_{\mathfrak{f}}$ (the minus Killing form of the Lie algebra $\mathfrak{f}$).
Then $1/2(e_i,e_i,-e_i,-e_i)$, $1/2(e_i,-e_i,e_i,-e_i)$, and $1/2(e_i,-e_i,-e_i,e_i)$, $i=1,\dots,\dim(\mathfrak{f})$,
forms $\langle\boldsymbol{\cdot}\,,\boldsymbol{\cdot}\rangle$-orthonormal bases in $\mathfrak{p}_1$, $\mathfrak{p}_1$, and $\mathfrak{p}_3$ respectively.
Therefore,
\begin{eqnarray*}
A=\sum\limits_{i,j,k=1}^{\dim(\mathfrak{f})}
\left\langle \left[\frac{1}{2}(e_i,e_i,-e_i,-e_i), \frac{1}{2}(e_j,-e_j,e_j,-e_j)\right], \frac{1}{2}(e_k,-e_k,-e_k,e_k)\right\rangle^2= \\
\frac{1}{64} \sum\limits_{i,j,k=1}^{\dim(\mathfrak{f})}
\left\langle \left[(e_i,e_i,-e_i,-e_i), (e_j,-e_j,e_j,-e_j)\right],(e_k,-e_k,-e_k,e_k)\right\rangle^2= \\
\frac{1}{64}\sum\limits_{i,j,k=1}^{\dim(\mathfrak{f})} 16\cdot (-B_{\mathfrak{f}}([e_i,e_j],e_k))^2=
\frac{1}{4}\sum\limits_{i,j=1}^{\dim(\mathfrak{f})} (-B_{\mathfrak{f}}([e_i,e_j],[e_i,e_j]))=\\
-\frac{1}{4}\sum\limits_{i,j=1}^{\dim(\mathfrak{f})} (-B_{\mathfrak{f}}([e_i,[e_i,e_j]],e_j]))=
-\frac{1}{4}\sum\limits_{i=1}^{\dim(\mathfrak{f})} \trace(\ad(e_i) \cdot \ad(e_i))=\\
-\frac{1}{4}\sum\limits_{i=1}^{\dim(\mathfrak{f})} B_{\mathfrak{f}}(e_i,e_i)=\frac{1}{4}\dim(\mathfrak{f}).
\end{eqnarray*}
Here we have used the definition of the Killing form: $B_{\mathfrak{f}}(X,Y)=\trace (\ad(X) \cdot \ad(Y))$ and the fact that all operators $\ad(X)$ are
skew-symmetric with respect to $B_{\mathfrak{f}}$.

Since $\dim(\mathfrak{p}_1)=\dim(\mathfrak{p}_2)=\dim(\mathfrak{p}_3)=\dim(\mathfrak{f})$, then $a_1=a_2=a_3=1/4$.
Note also, that this result follows also from more general calculations for an arbitrary Ledger~--~Obata space in \S~4 of~\cite{Nikonorov3}.
\end{proof}

{\small
\renewcommand{\arraystretch}{1.3}
\begin{table}[p]
{\bf Table 2.} The pairs $(\mathfrak{g},\mathfrak{h})$ corresponded to $\mathbb{Z}_2\times\mathbb{Z}_2$-groups in $\Aut(\mathfrak{g})$
with simple compact $\mathfrak{g}$.
\begin{center}
\begin{tabular}{|c|c|c|c|c|c|}
\hline
N&  $\mathfrak{g}$& $\mathfrak{h}$ & $\mathfrak{k}_1$ &  $\mathfrak{k}_2$ & $\mathfrak{k}_3$ \\
\hline\hline
1& $su(p+q)$ & $so(p)\oplus so(q)$ & $so(p+q)$ & $so(p+q)$ & $s(u(p)\oplus u(p))$ \\  \hline
2& $su(2p)$ & $u(p)$ & $so(2p)$ & $sp(p)$ & $s(u(p)\oplus u(p))$ \\  \hline
3& $su(2p+2q)$ & $sp(p)\oplus sp(q)$ & $sp(p+q)$ & $sp(p+q)$ & $s(u(2p)\oplus u(2q))$ \\  \hline
4& $\!\!su(p\!+\!q\!+\!r\!+\!s)\!\!\!$ &\!\!$s(u(p)\!\oplus\!u(q)\!\oplus\!u(r)\!\oplus\!u(s))$\!\!\!& \!\!$s(u(p\!+\!q)\!\oplus \!u(r\!+\!s))$\!\!&
\!\!$s(u(p\!+\!r)\!\oplus\!u(q\!+\!s))$\!\!&  \!\!$s(u(p\!+\!s)\!\oplus\!u(q\!+\!r))$\!\!\\  \hline
5& $su(2p)$ & $su(p)$ & $s(u(p)\oplus u(p))$ & $s(u(p)\oplus u(p))$ & $s(u(p)\oplus u(p))$ \\  \hline
6& $\!\!so(p\!+\!q\!+\!r\!+\!s)\!\!$ & $\!\!so(p)\!\oplus\!so(q)\!\oplus\!so(r)\!\oplus \!so(s)\!\!$ & \!\!$so(p\!+\!q)\!\oplus \!so(r\!+\!s)$\!\! &
\!\!$so(p\!+\!r)\!\oplus\!so(q\!+\!s)$\!\! &  \!\!$so(p\!+\!s)\!\oplus\!so(q\!+\!r)$\!\!\\  \hline
7& $so(2p)$ & $so(p)$ & $so(p)\oplus so(p)$ & $so(p)\oplus so(p)$ & $u(p)$ \\  \hline
8& $so(2p+2q)$ & $u(p)\oplus u(q)$ & $so(2p)\oplus so(2q)$ & $u(p+q)$ & $u(p+q)$ \\  \hline
9& $so(4p)$ & $sp(p)$ & $u(2p)$ & $u(2p)$ & $u(2p)$ \\  \hline
10& $sp(p)$ & $so(p)$ & $u(p)$ & $u(p)$ & $u(p)$ \\  \hline
11& $sp(p+q)$ & $u(p)\oplus u(q)$ & $u(p+q)$ & $u(p+q)$ & $sp(p)\oplus sp(q)$ \\  \hline
12& $sp(2p)$ & $sp(p)$ & $u(2p)$ & $sp(p)\oplus sp(p)$ & $sp(p)\oplus sp(p)$ \\  \hline
13& $\!\!sp(p\!+\!q\!+\!r\!+\!s)\!\!$ & $\!\!sp(p)\!\oplus\!sp(q)\!\oplus\!sp(r)\!\oplus \!sp(s)\!\!$ & \!\!$sp(p\!+\!q)\!\oplus \!sp(r\!+\!s)$\!\! &
\!\!$sp(p\!+\!r)\!\oplus\!sp(q\!+\!s)$\!\! &  \!\!$sp(p\!+\!s)\!\oplus\!sp(q\!+\!r)$\!\!\\  \hline
14& $e_6$ & $2su(3)\oplus \mathbb{R}^2$ & $su(6)\oplus sp(1)$ & $su(6)\oplus sp(1)$ & $su(6)\oplus sp(1)$ \\  \hline
15& $e_6$ & $su(4)\oplus2sp(1)\oplus \mathbb{R}$ & $su(6)\oplus sp(1)$ & $su(6)\oplus sp(1)$ & $so(10)\oplus \mathbb{R}$ \\  \hline
16& $e_6$ & $su(5)\oplus \mathbb{R}^2$ & $su(6)\oplus sp(1)$ & $so(10)\oplus \mathbb{R}$ & $so(10)\oplus \mathbb{R}$ \\  \hline
17& $e_6$ & $so(8)\oplus \mathbb{R}^2$ & $so(10)\oplus \mathbb{R}$ & $so(10)\oplus \mathbb{R}$ & $so(10)\oplus \mathbb{R}$ \\  \hline
18& $e_6$ & $sp(3)\oplus sp(1)$ & $su(6)\oplus sp(1)$ & $f_4$ & $sp(4)$ \\  \hline
19& $e_6$ & $so(6)\oplus \mathbb{R}$ & $su(6)\oplus sp(1)$ & $sp(4)$ & $sp(4)$ \\  \hline
20& $e_6$ & $so(9)$ & $so(10)\oplus \mathbb{R}$ & $f_4$ & $f_4$ \\  \hline
21& $e_6$ & $so(5)\oplus so(5)$ & $so(10)\oplus \mathbb{R}$ & $sp(4)$ & $sp(4)$ \\  \hline
22& $e_7$ & $su(6)\oplus \mathbb{R}^2$ & $so(12)\oplus sp(1)$ & $so(12)\oplus sp(1)$ & $so(12)\oplus sp(1)$ \\  \hline
23& $e_7$ & $so(8)\oplus 3sp(1)$ & $so(12)\oplus sp(1)$ & $so(12)\oplus sp(1)$ & $so(12)\oplus sp(1)$ \\  \hline
24& $e_7$ & $so(10)\oplus \mathbb{R}^2$ & $so(12)\oplus sp(1)$ & $e_6\oplus \mathbb{R}$ & $e_6\oplus \mathbb{R}$ \\  \hline
25& $e_7$ & $su(6)\oplus sp(1) \oplus \mathbb{R}$ & $so(12)\oplus sp(1)$ & $e_6\oplus \mathbb{R}$ & $su(8)$ \\  \hline
26& $e_7$ & $su(4)\oplus su(4) \oplus \mathbb{R}$ & $so(12)\oplus sp(1)$ & $su(8)$ & $su(8)$ \\  \hline
27& $e_7$ & $f_4$ & $e_6\oplus \mathbb{R}$ & $e_6\oplus \mathbb{R}$ & $e_6\oplus \mathbb{R}$ \\  \hline
28& $e_7$ & $sp(4)$ & $e_6\oplus \mathbb{R}$ & $su(8)$ & $su(8)$ \\  \hline
29& $e_7$ & $so(8)$ & $su(8)$ & $su(8)$ & $su(8)$ \\  \hline
30& $e_8$ & $e_6 \oplus \mathbb{R}^2$ & $e_7 \oplus sp(1)$ & $e_7 \oplus sp(1)$ & $e_7 \oplus sp(1)$ \\  \hline
31& $e_8$ & $so(12)\oplus 2sp(1)$ & $e_7 \oplus sp(1)$ & $e_7 \oplus sp(1)$ & $so(16)$ \\  \hline
32& $e_8$ & $su(8) \oplus \mathbb{R}$ & $e_7 \oplus sp(1)$ & $so(16)$ & $so(16)$ \\  \hline
33& $e_8$ & $so(8)\oplus so(8)$ & $so(16)$ & $so(16)$ & $so(16)$ \\  \hline
34& $f_4$ & $su(3)\oplus \mathbb{R}^2$ & $sp(3)\oplus sp(1)$ & $sp(3)\oplus sp(1)$ & $sp(3)\oplus sp(1)$ \\  \hline
35& $f_4$ & $so(5)\oplus 2sp(1)$ & $sp(3)\oplus sp(1)$ & $sp(3)\oplus sp(1)$ & $so(9)$ \\  \hline
36& $f_4$ & $so(8)$ & $so(9)$ & $so(9)$ & $so(9)$ \\  \hline
37& $g_2$ & $\mathbb{R}^2$ & $sp(1)\oplus sp(1)$ & $sp(1)\oplus sp(1)$ & $sp(1)\oplus sp(1)$ \\  \hline

\end{tabular}
\end{center}
\end{table}
\renewcommand{\arraystretch}{1}
}

\section{Generalized Wallach spaces and $\mathbb{Z}_2\times\mathbb{Z}_2$-symmetric spaces}\label{simple}

Let $\Gamma$ be a finite abelian subgroup of the automorphism group of a Lie group $G$.
Then the homogeneous space $G/H$ is called a $\Gamma$-symmetric space, if $(G^{\Gamma})_0 \subset H \subset G^{\Gamma}$,
where the subgroup $G^{\Gamma}$ consists of elements of $G$ invariant with respect to $\Gamma$, and
$(G^{\Gamma})_0$ is its unit component \cite{Lutz}. We get symmetric spaces for $\Gamma =\mathbb{Z}_2$ and
$k$\,-symmetric spaces for $\mathbb{Z}_k$ \cite{WolfGray}. For the Klein four-group $\mathbb{Z}_2\times\mathbb{Z}_2$, the above definition
give us $\mathbb{Z}_2\times\mathbb{Z}_2$-symmetric spaces, which were studied in \cite{Bahturin} and \cite{Kollross},
In particular, a classification of these spaces for simple compact groups $G$ were obtained in these two papers.

Another approach for this classification was applied in the paper
\cite{Huang}.
On the Lie algebra level this classification is equivalent to the classification of $\mathbb{Z}_2\times\mathbb{Z}_2$-groups in the automorphism group
$\Aut(\mathfrak{g})$ for all simple compact Lie algebra $\mathfrak{g}$. It is clear that any $\mathbb{Z}_2\times\mathbb{Z}_2$-group
are generated with two commuting involutive automorphisms of $\mathfrak{g}$.
\smallskip

By Theorem \ref{theorem1}, every generalized Wallach spaces $G/H$
with simple $G$ is a $\mathbb{Z}_2\times\mathbb{Z}_2$-symmetric space.
Hence, we obtain {\bf the following algorithm}.
We should consider a complete list of $\mathbb{Z}_2\times\mathbb{Z}_2$-symmetric spaces. It is useful to deal with
a such classification on the Lie algebra level, i.~e. with the classification of simple compact Lie algebras $\mathfrak{g}$ with
$\mathbb{Z}_2\times\mathbb{Z}_2$-groups $\Gamma$ in $\Aut(\mathfrak{g})$. A list of such objects is given e.~g. in \cite{Huang} (see Tables 3 and 4 there).
We reproduce it in our Table 2. We denote by $\mathfrak{h}$ a Lie subalgebra of $\mathfrak{g}$ consisted of fixed points of the corresponding
group $\Gamma$. By $\mathfrak{k}_1$, $\mathfrak{k}_2$, and $\mathfrak{k}_3$ we denote symmetric subalgebras in $\mathfrak{g}$,
that consist respectively of fixed points of involutions $\sigma_1$, $\sigma_2$, and $\sigma_3=\sigma_1\sigma_2$, such that
$\sigma_1$ and $\sigma_2$ generate $\Gamma$. This information could be easily derived from Tables 2, 3, and 4 of the paper \cite{Huang}.
Of course, we consider these subalgebras up to permutation. Recall also that all the pairs $(\mathfrak{k}_i, \mathfrak{h})$, $i=1,2,3$, are also symmetric.

Our final step is the following. For each line of Table 2, we should determine the ``effective parts`` $(\tilde{\mathfrak{k}}_i, \tilde{\mathfrak{h}}_i)$
of the pairs $(\mathfrak{k}_i, \mathfrak{h})$, $i=1,2,3$. This means that we need to eliminate nontrivial ideals of $\mathfrak{g}$ from
$\mathfrak{h}$. More precisely, let $\mathfrak{a}$ be a maximal ideal of $\mathfrak{k}_i$ in $\mathfrak{h}$,
then $\tilde{\mathfrak{k}}_i$ (respectively, $\tilde{\mathfrak{h}}_i$) is \linebreak
a~$B_{\mathfrak{g}}$-orthogonal compliment to $\mathfrak{a}$
in $\mathfrak{k}_i$ (respectively, $\mathfrak{h}$), where $B_{\mathfrak{g}}$ is the Killing form of the Lie algebra $\mathfrak{g}$.
Further, we should check that $\mathfrak{p}_i$, a $B_{\mathfrak{g}}$-orthogonal compliment to $\tilde{\mathfrak{h}}_i$
in $\tilde{\mathfrak{k}}_i$ (or, equivalently, a $B_{\mathfrak{g}}$-orthogonal compliment to ${\mathfrak{h}}$
in ${\mathfrak{k}}$) is $\ad(\tilde{\mathfrak{h}}_i)$-irreducible (or, equivalently, $\ad(\mathfrak{h})$-irreducible).
We have the following obvious result

\begin{lem}\label{irreducible} Let $(\mathfrak{g},\mathfrak{h})$ be a pair from Table 2. Then the following conditions are equivalent:

{\rm 1)} $(\mathfrak{g},\mathfrak{h})$ corresponds to a generalized Wallach space $G/H$ with connected $H$;

{\rm 2)} the modules $\mathfrak{p}_i$, $i=1,2,3$, are $\ad(\mathfrak{h})$-irreducible;

{\rm 3)} the symmetric pairs $(\tilde{\mathfrak{k}}_i,\tilde{\mathfrak{h}}_i)$, $i=1,2,3$, are irreducible.
\end{lem}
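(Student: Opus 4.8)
The plan is to verify the three stated equivalences by tracing through the definitions already set up in Section~\ref{waau} and in the algorithm preceding the lemma. The implications should be almost tautological once the bookkeeping with effective parts is clear, which is why the authors call the result ``obvious''; the task is to say carefully why.

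First I would establish the equivalence of~2) and~3). By construction, $\mathfrak{p}_i$ is the $B_\mathfrak{g}$-orthogonal complement of $\mathfrak{h}$ in $\mathfrak{k}_i$, and passing to the effective part removes exactly a maximal ideal $\mathfrak{a}\subset\mathfrak{h}$ of $\mathfrak{k}_i$, which lies entirely in $\mathfrak{h}$ and hence does not meet $\mathfrak{p}_i$; thus $\mathfrak{p}_i$ is equally well the $B_\mathfrak{g}$-orthogonal complement of $\tilde{\mathfrak{h}}_i$ in $\tilde{\mathfrak{k}}_i$, so $(\tilde{\mathfrak{k}}_i,\tilde{\mathfrak{h}}_i)$ is an effective symmetric pair with isotropy module $\mathfrak{p}_i$. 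For an effective compact symmetric pair, irreducibility of the pair is by definition the $\ad$-irreducibility of the isotropy representation, and since $\mathfrak{a}$ acts trivially on $\mathfrak{p}_i$, the $\ad(\mathfrak{h})$-action and the $\ad(\tilde{\mathfrak{h}}_i)$-action on $\mathfrak{p}_i$ have the same invariant subspaces. Hence $\mathfrak{p}_i$ is $\ad(\mathfrak{h})$-irreducible if and only if $(\tilde{\mathfrak{k}}_i,\tilde{\mathfrak{h}}_i)$ is irreducible, giving 2)$\Leftrightarrow$3).

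Next I would treat 1)$\Leftrightarrow$2). Starting from a pair $(\mathfrak{g},\mathfrak{h})$ in Table~2, Theorem~\ref{theorem1} already produces a connected simply connected compact homogeneous space $G/H$ with the decomposition $\mathfrak{g}=\mathfrak{h}\oplus\mathfrak{p}_1\oplus\mathfrak{p}_2\oplus\mathfrak{p}_3$ satisfying \eqref{decom2}, where $\mathfrak{p}_i$ is precisely the complement of $\mathfrak{h}$ in $\mathfrak{k}_i$ described above. Comparing with the definition \eqref{decom1}--\eqref{decom2} of a generalized Wallach space, the only remaining requirement is that each $\mathfrak{p}_i$ be $\Ad(H)$-irreducible; since $H$ is connected, this is the same as $\ad(\mathfrak{h})$-irreducibility, which is exactly condition~2). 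Conversely, any generalized Wallach space with simple $G$ arises this way by Theorem~\ref{theorem1} together with Proposition~\ref{sruct1}, so 1)$\Leftrightarrow$2) for the pairs in the table. One should also remark that $G/H$ is automatically almost effective: a nontrivial ideal of $\mathfrak{g}$ contained in $\mathfrak{h}$ would have to be an ideal of each $\mathfrak{k}_i$ lying in $\mathfrak{h}$, hence absorbed into $\mathfrak{a}$, but $\mathfrak{h}$ itself contains no ideal of $\mathfrak{g}$ by the way the pairs are listed; this is the only place where a small argument rather than a definition unwinding is needed.

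I do not expect a genuine obstacle here; the subtlety, if any, is purely organizational: one must keep straight the three different algebras $\mathfrak{h}$, $\tilde{\mathfrak{h}}_i$, and $\varphi_i$-type projections from Section~2 and be explicit that ``irreducible symmetric pair'' is meant in the effective sense, so that the notion of irreducibility used in condition~3) matches the standard tables of symmetric spaces to which the algorithm ultimately appeals. The main point to state clearly is that triviality of the $\mathfrak{a}$-action on $\mathfrak{p}_i$ makes $\ad(\mathfrak{h})$- and $\ad(\tilde{\mathfrak{h}}_i)$-irreducibility coincide, after which all three conditions collapse to the single statement ``each $\mathfrak{p}_i$ is an irreducible isotropy module''.
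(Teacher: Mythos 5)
Your proposal is correct and follows essentially the same route as the paper, which states the lemma as ``obvious'' and relies on exactly the definition-unwinding you carry out: the key observation that the maximal ideal $\mathfrak{a}$ acts trivially on $\mathfrak{p}_i$ (so $\ad(\mathfrak{h})$- and $\ad(\tilde{\mathfrak{h}}_i)$-irreducibility coincide) is already built into the paper's description of the algorithm preceding the lemma, and the equivalence with condition~1) is the combination of Theorem~\ref{theorem1} with connectedness of $H$, as you say. Your explicit check of almost effectiveness is a harmless (and for simple $\mathfrak{g}$ automatic) addition.
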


Removing from Table 2 all pairs that do not satisfy the conditions of Lemma \ref{irreducible}
we get the Table~1, that contains all
possible  $(\mathfrak{g}, \mathfrak{h})$ corresponded to generalized Wallach space with simple groups $G$.

\begin{theorem}\label{simplegroup} For any generalized Wallach space $G/H$ with simple $G$ and connected $H$, the pair $(\mathfrak{g},\mathfrak{h})$
is in Table 2. A pair $(\mathfrak{g},\mathfrak{h})$ from Table 2 generates a generalized Wallach space
if and only if it is in Table 1.
\end{theorem}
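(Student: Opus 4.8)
The plan is to establish Theorem~\ref{simplegroup} in two halves, matching its two sentences.

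\textbf{First half: every generalized Wallach space $G/H$ with simple $G$ and connected $H$ gives a pair $(\mathfrak{g},\mathfrak{h})$ in Table~2.} By Proposition~\ref{sruct1} (applied to such a space) the three involutions $\sigma_1,\sigma_2,\sigma_3$ associated to the splitting $\mathfrak{p}=\mathfrak{p}_1\oplus\mathfrak{p}_2\oplus\mathfrak{p}_3$ generate a $\mathbb{Z}_2\times\mathbb{Z}_2$-subgroup $\Gamma$ of $\Aut(\mathfrak{g})$, and $\mathfrak{h}$ is precisely the fixed-point subalgebra of $\Gamma$ (this is how $\mathfrak{h}$, $\mathfrak{p}_i$ were recovered from $\Gamma$ in the discussion preceding Theorem~\ref{theorem1}; one has to note that the fixed subalgebra equals $\mathfrak{h}$ and not something larger, which follows because the simultaneous $(+,+)$-eigenspace of $\sigma_1,\sigma_2$ is exactly $\mathfrak{h}$ by construction of the $\sigma_i$). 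Since $\mathfrak{g}$ is simple and compact, $(\mathfrak{g},\Gamma)$ appears in the classification of $\mathbb{Z}_2\times\mathbb{Z}_2$-subgroups of $\Aut(\mathfrak{g})$ reproduced in Table~2 (taken from \cite{Huang}, cf.\ also \cite{Bahturin,Kollross}); hence $(\mathfrak{g},\mathfrak{h})$ is one of the lines of Table~2. This direction is essentially bookkeeping on top of the cited classification.

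\textbf{Second half: a pair from Table~2 yields a generalized Wallach space iff it is in Table~1.} By Theorem~\ref{theorem1} the pair $(\mathfrak{g},\mathfrak{h})$ from Table~2 always gives a connected simply connected compact homogeneous space satisfying \eqref{decom1}--\eqref{decom2} with the (possibly reducible) modules $\mathfrak{p}_i$; it is a generalized Wallach space precisely when each $\mathfrak{p}_i$ is $\Ad(H)$-irreducible, equivalently (since $H$ is connected) $\ad(\mathfrak{h})$-irreducible. Lemma~\ref{irreducible} reduces this to the irreducibility of the three effective symmetric pairs $(\tilde{\mathfrak{k}}_i,\tilde{\mathfrak{h}}_i)$. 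So what remains is the finite check: for each of the $37$ lines of Table~2, compute the effective parts of the three symmetric pairs $(\mathfrak{k}_i,\mathfrak{h})$ and decide whether all three are irreducible symmetric pairs — using Cartan's list of irreducible symmetric pairs and the standard fact that a symmetric pair $(\mathfrak{k},\mathfrak{h})$ is reducible exactly when the isotropy module decomposes, which for the families in Table~2 can be read off from rank/dimension considerations (e.g.\ $so(p+q)/so(p)\oplus so(q)$ is irreducible unless $\{p,q\}=\{1,1\}$ or it degenerates, $u(p+q)/u(p)\oplus u(q)$ is always irreducible for $p,q\ge1$, Grassmannian-type pairs behave similarly, and the exceptional entries are irreducible by inspection). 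The lines surviving this test are collected as Table~1; the lines eliminated are exactly those where some $(\tilde{\mathfrak{k}}_i,\tilde{\mathfrak{h}}_i)$ splits, or where after removing an ideal of $\mathfrak{g}$ contained in $\mathfrak{h}$ one of the $\mathfrak{p}_i$ drops to dimension $0$ or becomes reducible (for instance low-parameter specializations in the classical families, and the $g_2$ line N\,37, where $\mathfrak{h}=\mathbb{R}^2$ and the pairs $(sp(1)\oplus sp(1),\mathbb{R}^2)$ are not irreducible).

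\textbf{Main obstacle.} The conceptual content is light; the real work — and the main obstacle — is the exhaustive, error-prone verification over all $37$ lines of Table~2: correctly identifying the maximal ideal of $\mathfrak{k}_i$ lying in $\mathfrak{h}$ for each line, computing the effective pair, and certifying irreducibility or exhibiting a splitting, while handling the boundary cases of the classical-family parameters (small $p,q,r,s$) where several pairs degenerate simultaneously. This is exactly the computation the paper defers to Section~\ref{calculation} together with Table~1, so in the write-up I would present the argument as above and refer to that tabulation for the case-by-case outcome rather than reproducing all $37$ checks.
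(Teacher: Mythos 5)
Your proposal follows essentially the same route as the paper: the first assertion is Theorem~\ref{theorem1} (equivalently Proposition~\ref{sruct1}) combined with the classification of Klein four-subgroups of $\Aut(\mathfrak{g})$ from \cite{Huang} reproduced in Table~2, and the second assertion is reduced via Lemma~\ref{irreducible} to deciding, line by line, whether the three effective symmetric pairs $(\tilde{\mathfrak{k}}_i,\tilde{\mathfrak{h}}_i)$ are irreducible. The only caveat is that you defer the $37$-line verification itself, which is where the paper's proof actually does its work (it runs through lines 1--12 explicitly --- e.g.\ eliminating line 1 because $(s(u(p)\oplus u(q)),\,so(p)\oplus so(q))$ is not irreducible symmetric, and forcing one of the four parameters to vanish in lines 4, 6, 13 so that $(\mathfrak{k}_i,\mathfrak{h})$ does not split into two symmetric pairs --- and then asserts the analogous check for the exceptional lines); so as written your argument establishes the correct reduction but leaves the content of Table~1 to the cited tabulation rather than verifying it.
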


\begin{proof}
The first assertion we get immediately from Theorem \ref{theorem1}.
Let us prove the second assertion.
We have to check all pairs from Table 2, using Lemma \ref{irreducible}. For a list of irreducible symmetric pairs see e.~g. \cite{Helgason1978} or \cite{Wolf2011}.

Let us consider {\bf line 1}.
It is easy to see that the pairs
$(\tilde{\mathfrak{k}}_1, \tilde{\mathfrak{h}}_1)$ and
$(\tilde{\mathfrak{k}}_2, \tilde{\mathfrak{h}}_2)$ are irreducible symmetric, but the pair
$(\tilde{\mathfrak{k}}_3, \tilde{\mathfrak{h}}_3)=(s(u(p)\oplus u(p)),so(r)\oplus so(q))$ is not. Hence the pair $(\mathfrak{g}, \mathfrak{h})$ does not generate
a generalized  Wallach space in this case.

Let us consider {\bf line 2}.
The pairs
$(\tilde{\mathfrak{k}}_1, \tilde{\mathfrak{h}}_1)$,
$(\tilde{\mathfrak{k}}_2, \tilde{\mathfrak{h}}_2)$, and
$(\tilde{\mathfrak{k}}_3, \tilde{\mathfrak{h}}_3)=\bigl(su(p)\oplus su(p),\diag(su(p))\bigr)$ are irreducible symmetric and we
get  line 4 in Table~1.

For {\bf line 3}, the pairs
$(\tilde{\mathfrak{k}}_1, \tilde{\mathfrak{h}}_1)$ and
$(\tilde{\mathfrak{k}}_2, \tilde{\mathfrak{h}}_2)$ are irreducible symmetric, but the pair
$(\tilde{\mathfrak{k}}_3, \tilde{\mathfrak{h}}_3)=(s(u(2p)\oplus u(2q)),sp(p)\oplus sp(q))$ is not.

For {\bf line 5}, all the pairs
$(\tilde{\mathfrak{k}}_i, \tilde{\mathfrak{h}}_i)$, $i=1,2,3$, are not
irreducible. The same is true for the {\bf lines 9} and {\bf 10}.

Let us check {\bf line 6} in Table 2.
In this case we have
$$
(\mathfrak{g}, \mathfrak{h})=(so(p+q+r+s),so(p)\oplus so(q)\oplus so(r)\oplus so(s)),
$$
$$\mathfrak{k}_1=so(p+q)\oplus so(r+s),\quad \mathfrak{k}_2=so(p+r)\oplus so(q+s), \quad \mathfrak{k}_3=so(p+s)\oplus so(q+r).
$$
It is easy to see that the symmetric pair $(\mathfrak{k}_i,\mathfrak{h})$ is
decomposed into the sum of two symmetric pairs provided that $p\cdot q\cdot r\cdot s \neq 0$. In order to get $\ad(\mathfrak{h})$-irreducible modules
$\mathfrak{p}_i$, we should put  $s=0$ (without loss of generality). Then we get
$(\tilde{\mathfrak{k}}_1, \tilde{\mathfrak{h}}_1)=(so(p+q),so(p)\oplus so(q))$,
$(\tilde{\mathfrak{k}}_2, \tilde{\mathfrak{h}}_2)=(so(p+r),so(p)\oplus so(r))$, and
$(\tilde{\mathfrak{k}}_3, \tilde{\mathfrak{h}}_3)=(so(r+q),so(r)\oplus so(q))$.
We see that the modules $\mathfrak{p}_i$, $i=1,2,3$, are $\ad(\mathfrak{h})$-irreducible for all $p,q,r \geq 1$ and $s=0$.
Hence we get line 1 of Table 1. Note that for $p=q=r=1$ we get the Lie algebra $so(3)\simeq su(2)\simeq sp(1)$ that generate the group
$SU(2)$, a $3$-dimensional generalized Wallach space.

Applying the same argument to {\bf lines 4} and {\bf 13} in Table 2 we get lines 2 and 3 in Table~1.

For {\bf line 7}, the pair
$(\tilde{\mathfrak{k}}_3, \tilde{\mathfrak{h}}_3)=(u(p),so(p))$ is not
irreducible.

For {\bf line 8}, the pairs
$(\tilde{\mathfrak{k}}_2, \tilde{\mathfrak{h}}_2)$ and
$(\tilde{\mathfrak{k}}_3, \tilde{\mathfrak{h}}_3)$ (that coincide with $\bigl(su(p+q),s(u(p)\oplus u(q))\bigr)$) are irreducible symmetric,
the pair $({\mathfrak{k}}_1, {\mathfrak{h}})=(so(2p)\oplus so(2q),u(p)\oplus u(q))$ is irreducible only if $q=1$ or $p=1$.
Hence, we get the line 5 in Table~1.

For {\bf line 11}, the pair
$(\tilde{\mathfrak{k}}_3, \tilde{\mathfrak{h}}_3)=(sp(p)\oplus sp(q),u(p)\oplus u(q))$ is not
irreducible.

For {\bf line 12}, the pair
$(\tilde{\mathfrak{k}}_1, \tilde{\mathfrak{h}}_1)=(u(2p),sp(p))$ is not
irreducible.
\smallskip

By the same manner we check {\bf lines 13-37}, corresponded to exceptional Lie algebras $\mathfrak{g}$.
We do not write all details here, because this is a direct and easy procedure.
Recall the following isomorphisms between Lie algebras, which simplify the mentioned check:
$sp(1) \simeq su(2) \simeq so(3)$, $sp(2) \simeq so(5)$, and $su(4)\simeq so(6)$.

Note that the pairs in {\bf lines 15, 17, 18, 23, 25, 29, 31, 33, 35}, and {\bf 36} generate generalized  Wallach spaces (see lines 6--15 of Table~1).
All other pairs $(\mathfrak{g},\mathfrak{h})$ with exceptional $\mathfrak{g}$ from Table 2 are such that at least one of the pairs
$(\tilde{\mathfrak{k}}_i, \tilde{\mathfrak{h}}_i)$, $i=1,2,3$, is not irreducible symmetric.
As a final result, we completes Table 1.
\end{proof}

\section{Calculation of $a_1$, $a_2$, $a_3$}\label{calculation}

Let $G/H$ be a compact homogeneous space with semisimple $G$, then the Killing form $B$
of the Lie algebra $\mathfrak{g}$ is negatively defined, and $\mathfrak{p}$, the $B$-orthogonal complement to $\mathfrak{h}$ in $\mathfrak{g}$,
could be naturally identified with the tangent space of $G/H$ at the point $eH$.
Note, that for any Riemannian invariant metric $\rho$ on  $G/H$,
the isotropy representation $\tau: H \mapsto \End(\mathfrak{p})$ of the isotropy group $H$  is such that every $\tau(a)=\Ad(a)|_{\mathfrak{p}}$
is orthogonal transformation.
Moreover, the isotropy representation $d\tau:\mathfrak{h} \mapsto \End(\mathfrak{p})$ of the isotropy algebra $\mathfrak{h}$
is such that every $d\tau(f)=\ad(f)|_{\mathfrak{p}}$ is a skew-symmetric.

For the inner product $\langle\boldsymbol{\cdot}\,,\boldsymbol{\cdot}\rangle|_{\mathfrak{h}}=-B|_{\mathfrak{h}}$ on $\mathfrak{h}$ we can consider
the~Casimir operator $C$ of the~(restruction of the) adjoint representation of~$\mathfrak{h}$ on~$\mathfrak{p}$.
Let~$\big\{e^j_0\big\}$ be an~orthonormal basis in~$\mathfrak{h}$ with
respect to $\langle\boldsymbol{\cdot}\,,\boldsymbol{\cdot}\rangle$, $1\le j\le\dim(\mathfrak{h})$, then (see e.~g. \cite[7.88]{Bes})
$$
C=-\sum\limits_{1\le j\le\dim(\mathfrak{h})} \ad \big(e^j_0\big)\big|_{ \mathfrak{p}} \circ \ad \big(e^j_0\big)\big|_{ \mathfrak{p}}.
$$
For any $\ad(\mathfrak{h})$-irreducible submodule $\mathfrak{q} \subset \mathfrak{p}$, the operator $C$ is proportional to the identity operator.
If $\mathfrak{p}$ is the sum of $\ad(\mathfrak{h})$-irreducible submodules $\mathfrak{p}_i$,
then we have $C|_{ \mathfrak{p}_i}=c_i \Id|_{ \mathfrak{p}_i}$ for some constant~$c_i$, that are called the Casimir constants.
Since $\ad \big(e^j_0\big)\big|_{ \mathfrak{p}}$ is skew-symmetric, then
\begin{equation}\label{casim}
c_i=\sum_{1\le j\le\dim(\mathfrak{h})} \left\langle [e^j_0,e],[e^j_0,e] \right\rangle
\end{equation}
for an~arbitrary unit (with respect to $\langle\boldsymbol{\cdot}\,,\boldsymbol{\cdot}\rangle=-B$) vector $e$ in~$\mathfrak{p}_i$. In particular,
$c_i \geq 0$.
\medskip

Now, we continue to study generalized Wallach spaces.
Recall one important property of the numbers $[ijk]$,
see (\ref{constwz}).
According to lemma 1.5 in~\cite{WZ3}, we get the~formula
$$
\sum_{j,k}[ijk]=d_i(1-2c_i),
$$
for all $i=1,2,3$, where $c_i$ is the~corresponding Casimir constant, $d_i=\dim({\mathfrak{p}}_i)$.
Using the above consideration we can rewrite this equality as follows (see (\ref{consta})):
\begin{equation}\label{sumconst}
2A=[ijk]+[ikj]=d_i(1-2c_i), \quad i\neq j \neq k \neq i.
\end{equation}
Hence we get the following result (obtained in \cite{Nikonorov2} and \cite{Lomshakov2}).

\begin{lem}\label{calca}
For a generalized Wallach space, we have
$d_i\ge 2A$ for all $i=1,2,3$. Moreover, the~equality $d_i=2A$ is equivalent to the~condition
$[\mathfrak{h},\mathfrak{p}_i]=0$.
\end{lem}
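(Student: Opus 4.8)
The plan is to derive everything from the key identity \eqref{sumconst}, namely $2A = d_i(1 - 2c_i)$ for each $i \in \{1,2,3\}$, together with the nonnegativity of the Casimir constants $c_i$ recorded just above in \eqref{casim}. First I would rewrite \eqref{sumconst} as $2A = d_i - 2 d_i c_i$, so that $d_i - 2A = 2 d_i c_i$. Since $C|_{\mathfrak{p}_i} = c_i \Id|_{\mathfrak{p}_i}$ is a nonnegative operator and $d_i > 0$, the right-hand side $2 d_i c_i$ is nonnegative; hence $d_i - 2A \ge 0$, i.e. $d_i \ge 2A$. This handles the first assertion for each $i=1,2,3$ uniformly.

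For the equivalence, observe that $d_i = 2A$ holds if and only if $2 d_i c_i = 0$, which (as $d_i > 0$) is equivalent to $c_i = 0$. It then remains to argue that $c_i = 0$ if and only if $[\mathfrak{h}, \mathfrak{p}_i] = 0$. One direction is immediate: if $[\mathfrak{h}, \mathfrak{p}_i] = 0$, then every $\ad(e_0^j)|_{\mathfrak{p}_i}$ vanishes, so by \eqref{casim} $c_i = 0$. Conversely, suppose $c_i = 0$. Picking any unit vector $e \in \mathfrak{p}_i$, formula \eqref{casim} gives
\[
0 = c_i = \sum_{1 \le j \le \dim(\mathfrak{h})} \bigl\langle [e_0^j, e], [e_0^j, e] \bigr\rangle,
\]
and since $\langle \boldsymbol{\cdot}\,, \boldsymbol{\cdot}\rangle$ is positive definite, each summand is nonnegative, forcing $[e_0^j, e] = 0$ for all $j$. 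As $\{e_0^j\}$ is a basis of $\mathfrak{h}$ and $e$ ranges over a spanning set of $\mathfrak{p}_i$, this yields $[\mathfrak{h}, \mathfrak{p}_i] = 0$.

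I do not anticipate a serious obstacle here: both assertions are essentially immediate consequences of \eqref{sumconst} and the positive-definiteness of $\langle \boldsymbol{\cdot}\,, \boldsymbol{\cdot}\rangle$. The only point requiring a little care is making the equivalence $c_i = 0 \Leftrightarrow [\mathfrak{h}, \mathfrak{p}_i] = 0$ fully rigorous — in particular noting that \eqref{casim} holds for an arbitrary unit vector $e \in \mathfrak{p}_i$ and that, by $\ad(\mathfrak{h})$-invariance of $\mathfrak{p}_i$, vanishing of $[e_0^j, e]$ on a spanning set of unit vectors already implies the full bracket relation. The result is attributed to \cite{Nikonorov2} and \cite{Lomshakov2}, so it suffices to record this short derivation.
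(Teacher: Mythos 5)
Your proof is correct and follows essentially the same route as the paper: the lemma is presented there as an immediate consequence of the identity $2A=d_i(1-2c_i)$ from (\ref{sumconst}) together with the expression (\ref{casim}) for $c_i$ as a sum of nonnegative terms, which gives both $c_i\ge 0$ and the equivalence $c_i=0\Leftrightarrow[\mathfrak{h},\mathfrak{p}_i]=0$. You have merely written out explicitly the short argument the paper leaves implicit.
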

\bigskip

Now, we~give a~convenient method for calculating~$c_i$ and~$A$ for generalized Wallach spaces $G/H$ with simple $G$.
Consider a~connected Lie subgroup~$K_i$ in~$G$
with Lie algebra~$\mathfrak{k}_i=\mathfrak{h}\oplus \mathfrak{p}_i$ as in (\ref{subalgi}). It is clear, that the~homogeneous spaces
~$K_i/H$ and~$G/K_i$ are locally symmetric (see Section~\ref{waau} and~\cite[7.70]{Bes}). If~$K_i$
does not act almost effectively on~$M=K_i/H$, consider its subgroup $\widetilde{\!K}_i$ acting
on $M=K_i/H={}\,\widetilde{\!K}_i/{}\,\widetilde{\!H}_i$ almost effectively
(here we denote by ${}\,\widetilde{\!H_i}$ the~corresponding isotropy group).
The~pair of the corresponding Lie algebras $\big(\tilde{\mathfrak{k}}_i,\tilde{\mathfrak{h}}_i\big)$ is irreducible symmetric
(see~\cite[7.100]{Bes}).
A more direct and convenient way to produce the pair $\big(\tilde{\mathfrak{k}}_i,\tilde{\mathfrak{h}}_i\big)$ is the following:
If $\mathfrak{a}$ is a maximal ideal of $\mathfrak{k}_i$ in $\mathfrak{h}$,
then $\tilde{\mathfrak{k}}_i$ (respectively, $\tilde{\mathfrak{h}}_i$) is a $\langle\boldsymbol{\cdot}\,,\boldsymbol{\cdot}\rangle$-orthogonal
compliment to $\mathfrak{a}$
in $\mathfrak{k}_i$ (respectively, $\mathfrak{h}$).
\bigskip

If~$\tilde{\mathfrak{k}}_i$ is a~simple Lie algebra then its Killing
form~$B_{\tilde{\mathfrak{k}}_i}$ is proportional to the~restriction of the~Killing form~$B$
of~$\mathfrak{g}$ to~$\tilde{\mathfrak{k}}_i$. Therefore, there exists a~positive number~${\gamma}_i$
with the~property
\begin{equation}\label{dynkinind1}
B_{\tilde{\mathfrak{k}}_i}={\gamma}_i \cdot B\bigr\vert_{\tilde{\mathfrak{k}}_i}.
\end{equation}

\begin{lem}\label{constcalc}
In the notations as above, we have $c_i=\gamma_i/2$ and $A=d_i(1-{\gamma}_i)/2$.
\end{lem}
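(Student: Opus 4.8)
The plan is to express the Casimir constant $c_i$ and the structure constant $A$ in terms of the proportionality factor $\gamma_i$ from~(\ref{dynkinind1}), using the fact that the pair $(\tilde{\mathfrak{k}}_i,\tilde{\mathfrak{h}}_i)$ is irreducible symmetric. First I would recall that the key relation~(\ref{sumconst}) gives $2A = d_i(1-2c_i)$, so once $c_i = \gamma_i/2$ is established, the formula $A = d_i(1-\gamma_i)/2$ follows immediately by substitution. Thus the whole proof reduces to showing $c_i = \gamma_i/2$, where $c_i$ is the Casimir constant of the isotropy representation of $\mathfrak{h}$ on $\mathfrak{p}_i$ computed with respect to $-B = \langle\cdot\,,\cdot\rangle$.

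To compute $c_i$, I would work inside the subalgebra $\tilde{\mathfrak{k}}_i = \tilde{\mathfrak{h}}_i \oplus \mathfrak{p}_i$, noting that the ideal $\mathfrak{a}$ (the maximal ideal of $\mathfrak{k}_i$ lying in $\mathfrak{h}$) acts trivially on $\mathfrak{p}_i$, so only $\tilde{\mathfrak{h}}_i$ contributes to the Casimir sum~(\ref{casim}). Since $(\tilde{\mathfrak{k}}_i,\tilde{\mathfrak{h}}_i)$ is an irreducible symmetric pair with $\tilde{\mathfrak{k}}_i$ simple, the standard fact for symmetric spaces is that the Casimir operator of $\tilde{\mathfrak{h}}_i$ acting on the tangent module $\mathfrak{p}_i$, taken with respect to $-B_{\tilde{\mathfrak{k}}_i}$, equals $\tfrac{1}{2}\Id$ — this comes from the relation $\Ric = \tfrac12\langle\cdot\,,\cdot\rangle$ for the Killing metric on a symmetric space, or equivalently from a direct trace computation using $[\tilde{\mathfrak{h}}_i,\tilde{\mathfrak{h}}_i]\subseteq \tilde{\mathfrak{h}}_i$, $[\mathfrak{p}_i,\mathfrak{p}_i]\subseteq\tilde{\mathfrak{h}}_i$, $[\tilde{\mathfrak{h}}_i,\mathfrak{p}_i]\subseteq\mathfrak{p}_i$ and the fact that for a simple Lie algebra the Casimir of the adjoint representation is the identity. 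Rescaling from $-B_{\tilde{\mathfrak{k}}_i}$ to $-B|_{\tilde{\mathfrak{k}}_i} = \gamma_i^{-1}(-B_{\tilde{\mathfrak{k}}_i})$ via~(\ref{dynkinind1}) then multiplies the Casimir constant by $\gamma_i$, giving $c_i = \gamma_i \cdot \tfrac12 = \gamma_i/2$, and one checks that $\langle\cdot\,,\cdot\rangle|_{\mathfrak{h}} = -B|_{\mathfrak{h}}$ restricted to $\tilde{\mathfrak{h}}_i$ agrees with the restriction of $-B|_{\tilde{\mathfrak{k}}_i}$, which is what~(\ref{casim}) uses.

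The main obstacle is bookkeeping the rescaling carefully: one must track that the Casimir constant in~(\ref{casim}) is defined using the orthonormal basis of \emph{all} of $\mathfrak{h}$ with respect to $\langle\cdot\,,\cdot\rangle = -B$, verify that the part of $\mathfrak{h}$ outside $\tilde{\mathfrak{h}}_i$ (namely $\mathfrak{a}$) contributes nothing since $[\mathfrak{a},\mathfrak{p}_i] = 0$, and then confirm that an orthonormal basis of $\tilde{\mathfrak{h}}_i$ with respect to $-B|_{\tilde{\mathfrak{k}}_i}$ scales to $\sqrt{\gamma_i}$ times an orthonormal basis with respect to $-B_{\tilde{\mathfrak{k}}_i}$, so that the sum in~(\ref{casim}) picks up exactly the factor $\gamma_i$ relative to the value $1/2$ computed intrinsically in $\tilde{\mathfrak{k}}_i$. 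Once $c_i = \gamma_i/2$ is in hand, plugging into $2A = d_i(1-2c_i)$ yields $A = d_i(1-\gamma_i)/2$, completing the proof.
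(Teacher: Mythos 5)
Your proposal is correct and follows essentially the same route as the paper: isolate the contribution of $\tilde{\mathfrak{h}}_i$ (since $\mathfrak{a}$ annihilates $\mathfrak{p}_i$), invoke the Casimir constant $1/2$ for the irreducible symmetric pair $(\tilde{\mathfrak{k}}_i,\tilde{\mathfrak{h}}_i)$ with respect to $-B_{\tilde{\mathfrak{k}}_i}$, rescale by $\gamma_i$ using~(\ref{dynkinind1}) to get $c_i=\gamma_i/2$, and substitute into $2A=d_i(1-2c_i)$. The rescaling direction is handled correctly, matching the paper's explicit computation with the bases $f_j^{\,0}=\gamma_i^{-1/2}e_j^0$.
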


\begin{proof} Clearly, $\tilde{\mathfrak{k}}_i=\tilde{\mathfrak{h}}_i\oplus \mathfrak{p}_i$. Since for a locally
symmetric space, the~Casimir constant is equal to~$1/2$ (see~\cite[7.93]{Bes}),
we may calculate~$c_i$ as follows. Consider any~$\langle\boldsymbol{\cdot}\,,\boldsymbol{\cdot}\rangle$-orthonormal basis
$\big\{e_j^0\big\}$ in~$\mathfrak{h}$, such
that~$e_j^0\in\tilde{\mathfrak{h}}_i$ for $0\le j\le\dim(\tilde{\mathfrak{h}})$ and $\langle e_j^0, \tilde{\mathfrak{h}}_i \rangle=0$
for $j>\dim(\tilde{\mathfrak{h}})$. Obviously,
$\big[e_j^0,e\big]=0$ for all $e\in \mathfrak{p}_i$ and
$j>\dim(\tilde{\mathfrak{h}})$. Therefore,
$$
c_i=\sum_{0\le j\le\dim(\mathfrak{h})} \left\langle [e_j^0,e],[e_j^0,e] \right\rangle=
\sum_{0\le j\le \dim(\tilde{\mathfrak{h}})} \left\langle [e_j^0,e],[e_j^0,e] \right\rangle
$$
for every unit vector $e\in \mathfrak{p}_i$. Consider
the~vectors $f_j^{\,0}=\frac1{\sqrt{{\gamma}_i}}e^0_j$, $1\le i\le \dim(\tilde{\mathfrak{h}})$. They form an~orthonormal basis in~$\tilde{\mathfrak{h}}_i$
with respect to~$-B_{\tilde{\mathfrak{k}}}$.
Suppose that $\tilde{e}=\frac1{\sqrt{{\gamma}_i}}e$, where $e$ is a vector of unit length
with respect to
$-B(\boldsymbol{\cdot}\,,\boldsymbol{\cdot})=
\langle\boldsymbol{\cdot}\,,\boldsymbol{\cdot}\rangle$.
Then, the~Casimir constant
($=1/2$) of the~adjoint representation of~$\tilde{\mathfrak{h}}_i$ on~$\mathfrak{p}_i$ satisfies the following equality:
\begin{eqnarray*}
\frac{1}{2}=\sum_{0\le j\le \dim(\tilde{\mathfrak{h}})}
-B_{\mathfrak{k}_i}([f_j^{\,0},\tilde{e}\,],[f_j^{\,0},\tilde{e}\,])=\\
\sum_{0\le j\le \dim(\tilde{\mathfrak{h}})}
\gamma_i \left\langle [f_j^{\,0},\tilde{e}\,],[f_j^{\,0},\tilde{e}\,]\right\rangle=\frac{1}{\gamma_i}
\sum_{0\le j\le \dim(\tilde{\mathfrak{h}})} \left\langle [e_j^0,e],[e_j^0,e] \right\rangle=\frac{c_i}{{\gamma}_i}.
\end{eqnarray*}
Therefore, ${\gamma}_i=2 {c_i}$.
Furthermore, since $2A=d_i(1-2c_i)$, we have $A=d_i(1-{\gamma}_i)/2$.
The~lemma is proved.
\end{proof}

\begin{remark}
Since $a_i=A/d_i$, $i=1,2,3$, then $a_1=a_2=a_3$ if and only if $c_1=c_2=c_3$. Note that the last equality holds if and only if the Killing (the standard)
metric on the space $G/H$ is Einstein \cite[7.92]{Bes}. Therefore, the equality $a_1=a_2=a_3$ means the same property.
\end{remark}

\smallskip

The following formulas for the Killing forms of classical Lie algebra are well known:
$$
B_{so(n)}(X,Y)=-(n-2)\trace(XY),\quad  B_{sp(n)}(X,Y)=-2(n+1)\trace(XY),
$$
$$
B_{su(n)}(X,Y)=-2n\trace(XY).
$$
Hence, if we consider the inclusion $so(k+l)\subset so(k+l+m)$ with $X \mapsto \diag(X,0)=:X'$,
then
\begin{eqnarray*}
B_{so(k+l)}(X,Y)=-(k+l-2)\trace(XY),\\ B_{so(k+l+m)}(X',Y')=-(k+l+m-2)\trace(X'Y')=-(k+l+m-2)\trace(XY),
\end{eqnarray*}
and, consequently, $B_{so(k+l)}=\frac{k+l-2}{k+l+m-2}\cdot B_{so(k+l+m)}$.
Using the same argument for other two type of classical Lie algebras and Lemma \ref{constcalc}, we easily get the values of $A$, $a_1$, $a_2$, and $a_3$ for the spaces
(see Table~1):
{\small$$
SO(k+l+m)/SO(k)\cdot SO(l)\cdot SO(m),\quad Sp(k+l+m)/Sp(k)\cdot Sp(l)\cdot Sp(m), \quad SU(k+l+m)/S(U(k)\cdot U(l)\cdot U(m)).
$$}
\renewcommand{\arraystretch}{1.3}\renewcommand{\tabcolsep}{0.44cm}
\begin{table}[t]
\noindent{\bf Table 3.} The values of $\dim(\mathfrak{g})$  and $B_{\mathfrak{g}}(\beta_m,\beta_m)$ for compact simple Lie algebras $\mathfrak{g}$.\newline

\begin{center}
\begin{tabular}{|c|c|c|c|c|c|c|c|c|}
\hline
$\mathfrak{g}$& $so(n)$ & $sp(n)$ & $su(n)$ & $g_2$ & $f_4$ &\ $e_6$ &  $e_7$ & $e_8$\\
\hline
$\dim(\mathfrak{g})$ & $n(n-1)/2$ & $2n^2+n$ & $n^2-1$ & $14$ & $52$ & $78$ & $133$ & $248$ \\
\hline
$B_{\mathfrak{g}}(\beta_m,\beta_m)$ & $4(n-2)$ & $4(n+1)$ & $4n$ & $16$ & $36$ & $48$ & $72$ & $120$ \\
\hline
\end{tabular}
\end{center}
\end{table}
\renewcommand{\arraystretch}{1}

Let us consider the pair $(\mathfrak{g},\mathfrak{h})=(su(2l),u(l))$.
In this case we have $\mathfrak{k}_1=so(2l)$, $\mathfrak{k}_2=sp(l)$, and $\mathfrak{k}_3=s(u(l)\oplus u(l))$.
From the standard inclusion $so(2l)\subset su(2l)$ we get
$B_{so(2l)}(X,Y)=-(2l-2)\trace(XY)$ and $B_{su(2l)}(X,Y)=-4l\trace(XY)$, therefore, $\gamma_1=\frac{l-1}{2l}$.
By  Lemma \ref{constcalc} we get $a_1=\frac{l+1}{4l}$. Since $d_1=l(l-1)$, $d_2=l(l+1)$, and $d_3=l^2-1$, then  we get
$A=(l^2-1)/4$, $a_2=\frac{l-1}{4l}$, and $a_3=1/4$ (recall, that $a_id_i=A$).
Note that $(\tilde{\mathfrak{k}}_3,\tilde{\mathfrak{h}}_3)=\bigl(su(l)\oplus su(l),\diag(su(l))\bigr)$ in this case.
In particular, $\tilde{\mathfrak{k}}_3=su(l)\oplus su(l)$ is not a simple Lie algebra.
\smallskip

For all other cases in Table 1 we will apply  Lemma \ref{constcalc} and the following method.
Consider an inclusion $\mathfrak{k} \subset \mathfrak{g}$ of simple compact Lie algebras and try to determine a constant $\gamma$ such that
$B_{\mathfrak{k}}=\gamma \cdot B_{\mathfrak{g}}$, where $B_{\mathfrak{k}}$ and $B_{\mathfrak{g}}$ are the Killing forms of the Lie algebras
$\mathfrak{k}$ and $\mathfrak{g}$. Suppose that $\beta_m$ (respectively, $\beta'_m$) is one of the roots of maximal length in the Lie algebra
$\mathfrak{g}$ (respectively, $\mathfrak{k}$). Then the formula
\begin{equation}\label{dynk}
\gamma=\frac{B_{\mathfrak{k}}(\beta'_m, \beta'_m)}{j \cdot B_{\mathfrak{g}}(\beta_m,\beta_m)}
\end{equation}
holds, where $j$ means the Dynkin index of the Lie subalgebra $\mathfrak{k}$ in $\mathfrak{g}$,
see e.~g. \cite[ pp.~38--40]{DZ} for details. Note that the Dynkin index is a natural number and it was computed for all
simple subalgebras of exceptional Lie algebras in \cite{Dynkin1952} (see also \cite{Minchenko}).
The value $B_{\mathfrak{g}}(\beta_m,\beta_m)$ for simple Lie algebra are shown in Table 3 (this is a reproduction of Table 3 in \cite{DZ}).
\smallskip

Let us use this algorithm for the pair
$(\mathfrak{g},\mathfrak{h})=(so(2l),u(1)\oplus u(l-1))$.
In this case $\mathfrak{k}_1=su(l)\oplus \mathbb{R}$, $\mathfrak{k}_2=su(l)\oplus \mathbb{R}$, and $\mathfrak{k}_3=so(2l-2)\oplus \mathbb{R}$.
Note that $(\tilde{\mathfrak{k}}_1,\tilde{\mathfrak{h}}_1)=(\tilde{\mathfrak{k}}_2,\tilde{\mathfrak{h}}_2)=(su(l),s(u(1)\oplus u(l-1))$,
$(\tilde{\mathfrak{k}}_3,\tilde{\mathfrak{h}}_3)=(so(2l-2),u(l-1))$.
Note also that the Dynkin index $j$ for subalgebras $su(l)$ and $so(2l-2)$ in $so(2l)$ is $1$.
Using Table 3, we get $\gamma_1=\gamma_2=\frac{l}{2(l-1)}$ and $\gamma_3=\frac{l-2}{l-1}$.
Therefore, by Lemma \ref{constcalc} we have $a_1=a_2=\frac{l-2}{4(l-1)}$ and $a_3=\frac{1}{2(l-1)}$.
Since $d_1=d_2=2(l-1)$, $d_3=(l-1)(l-2)$, we also get $A=(l-2)/2$.

We list all (which will be needed) symmetric pairs $(\mathfrak{g}, \mathfrak{h})$ with exceptional $\mathfrak{g}$,
with pointing of the Dynkin index $j$  of
some simple summands ($j$ for $\mathfrak{k}$ is shown as $\mathfrak{k}^j$) in subalgebras (see \cite{Dynkin1952}):
$$
(e_6,su(6)^1\oplus su(2)),\quad  (e_6, so(10)^1\oplus \mathbb{R}),\quad  (e_6, sp(3)^1\oplus sp(1)),\quad (e_6, f_4^1),
$$
$$
(e_6, sp(4)^1),\quad (e_7, so(12)^1\oplus sp(1)),\quad (e_7, e_6^1\oplus \mathbb{R}),\quad (e_7, su(8)^1),
$$
$$
(e_8, e_7^1\oplus sp(1)),\quad (e_8, so(16)^1),\quad  (f_4, sp(3)^1\oplus sp(1)),\quad (f_4, so(9)^1),
$$
This information, together with Table 2 and Table 3, the equality (\ref{dynk}) and  Lemma \ref{constcalc}
allow us to calculate the values of $A$, $a_1$, $a_2$, and $a_3$ for all pairs in Table 1 with exceptional $\mathfrak{g}$.
\smallskip

Therefore, we get the numbers  $a_1$, $a_2$, and $a_3$ for all pairs in Table 1.

\section{The set of points $(a_1,a_2,a_3)$ in $[0,1/2]^3$}

The authors of \cite{AANS1,AANS2} studied local properties of the normalized Ricci flow for generalized Wallach spaces.
It is remarkable that the normalized Ricci flow for these space could be represented as a planar dynamical system depended in addition on
the constants $a_1$, $a_2$, and $a_3$. Even not every triple $(a_1,a_2,a_3)\in [0,1/2]^3$ corresponds to some generalized Wallach space,
it is useful to study this dynamical system with all such triples.

\begin{center}
\begin{figure}[t]
\centering\scalebox{1}[1]{
\includegraphics[angle=-90,totalheight=2.5in]{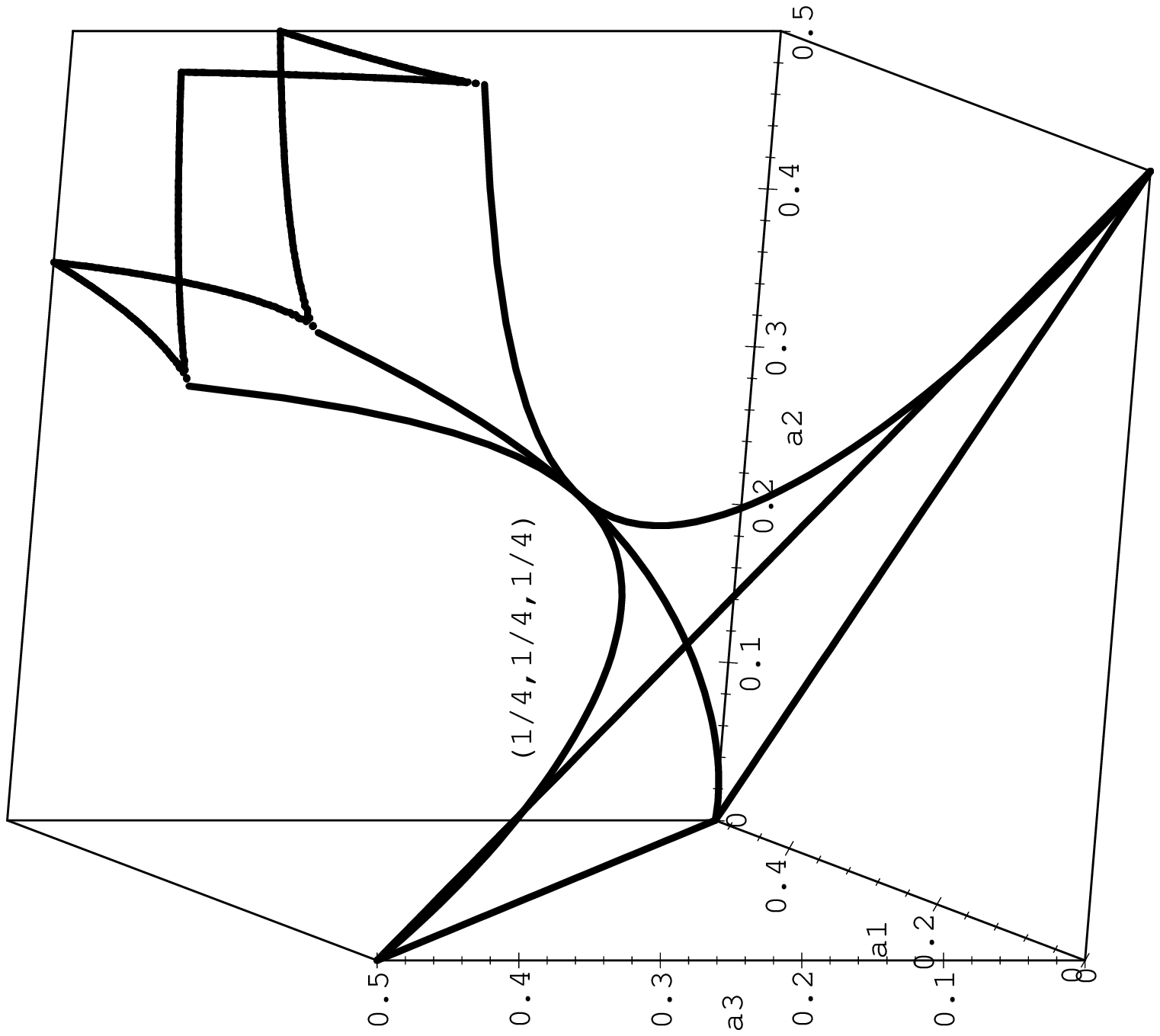}
\includegraphics[angle=-90,totalheight=2.5in]{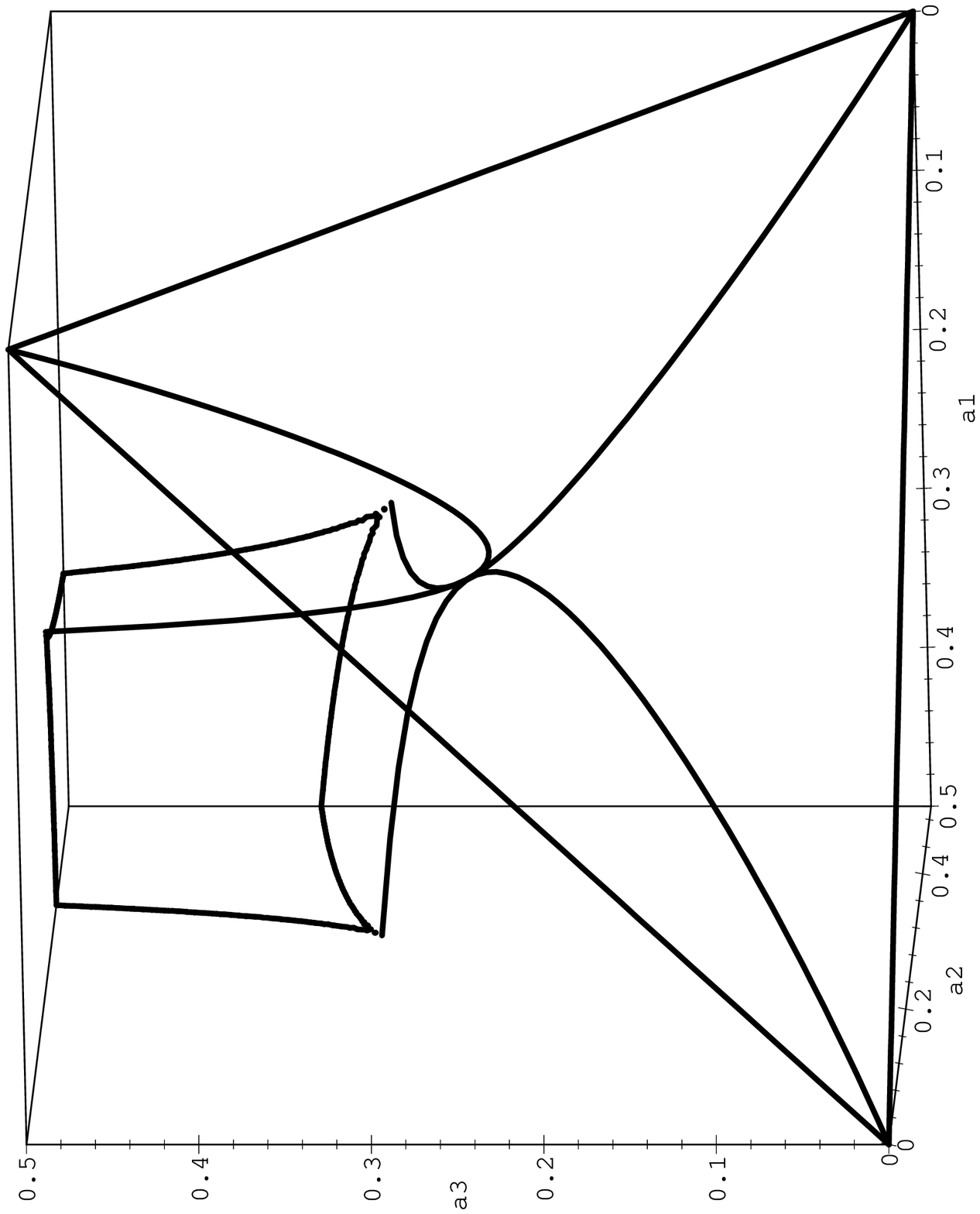}}
\caption{The surface $\Omega \cap [0,1/2]^3$.}
\label{singsur}
\end{figure}
\end{center}

Let us consider one special algebraic surface $\Omega \subset \mathbb{R}^3$, defined by the equation $Q(a_1,a_2,a_3)=0$, where
\begin{eqnarray}\label{singval2}\notag
Q(a_1,a_2,a_3)\,=\,
(2s_1+4s_3-1)(64s_1^5-64s_1^4+8s_1^3+12s_1^2-6s_1+1\\\notag
+240s_3s_1^2-240s_3s_1-1536s_3^2s_1-4096s_3^3+60s_3+768s_3^2)\\
-8s_1(2s_1+4s_3-1)(2s_1-32s_3-1)(10s_1+32s_3-5)s_2\\\notag
-16s_1^2(13-52s_1+640s_3s_1+1024s_3^2-320s_3+52s_1^2)s_2^2\\\notag
+64(2s_1-1)(2s_1-32s_3-1)s_2^3+2048s_1(2s_1-1)s_2^4,
\end{eqnarray}
and
$$
s_1 = a_1+a_2+a_3, \quad s_2 = a_1a_2+a_1a_3+a_2a_3, \quad s_3 = a_1a_2a_3.
$$

Obviously, $Q(a_1,a_2,a_3)$ is a symmetric polynomial in $a_1,a_2,a_3$ of degree 12.
The surface $\Omega$ was very important for the statement of Theorem 7 in \cite{AANS1}, which
provides a general result about the type of the non-degenerate
singular points of the normalized Ricci flow for a generalized Wallach space
with given $a_1$, $a_2$, and $a_3$.

In the rest of this section we deal only with points of the surface $\Omega$ in the cube $[0,1/2]^3$.
We recall some important properties of $\Omega$, see \cite{AANS1} for details.

The points $(0,0,1/2)$, $(0,1/2,0)$, and $(1/2,0,0)$ are all vertices of the cube $[0,1/2]^3$, that are points of $\Omega$.
For $a_1=1/2$ and $a_2,a_3 \in (0,1/2]$ points of $\Omega$ form
a curve homeomorphic to the interval $[0,1]$ with endpoints $(1/2,1/2,\sqrt{2}/4\approx 0.3535533905)$ and $(1/2,\sqrt{2}/4\approx 0.3535533905,1/2)$
and with the singular point (a cusp) at the point $a_3=a_2=(\sqrt{5}-1)/4\approx 0.3090169942$.
The same is also valid under the permutation $a_1\to a_2 \to a_3\to a_1$.

The plain $s_1=a_1+a_2+a_3=1/2$ intersects
the set $\Omega \cap [0,1/2]^3$ exactly for points in the boundary of the triangle with the vertices
$(0,0,1/2)$, $(0,1/2,0)$, and $(1/2,0,0)$. For all other points in $\Omega \cap (0,1/2]^3$ we have the inequality
$s_1=a_1+a_2+a_3>1/2$.

It is not difficult to show that $(1/4,1/4,1/4)$ is the only point in $\Omega \cap [0,1/2]^3$
satisfying the additional condition $s_1=a_1+a_2+a_3=3/4$.
It turns out that the point $(1/4,1/4,1/4)$ is a singular point of degree $3$ of the algebraic surface
$\Omega$ (see Figure \ref{singsur}). This point is an elliptic umbilic (in the sense of Darboux) on the surface $\Omega$.

Now, we discuss a part of the surface $\Omega$ in the cube $(0,1/2)^3$.
Recall that $\Omega$ is invariant under the permutation $a_1\to a_2\to a_3\to a_1$. It should be noted that the set
$(0,1/2)^3\cap \Omega$ is connected.
There are three curves (``edges'') of {\it singular points} on $\Omega$ (i.~e. points where $\nabla Q =0$):
one of them has parametric representation $a_1=-\frac{1}{2}\frac{16t^3-4t+1}{8t^2-1}, a_2=a_3=t$, and the others
are defined by permutations of $a_i$. These curves  have a common point $(1/4,1/4,1/4)$ (see Figures~\ref{singsur}). The part of $\Omega$ in $(0,1/2)^3$
consists of three (pairwise isometric) ``bubbles'' spanned on every pair of ``edges''.

Another important observation is the following:  the set $(0,1/2)^3\setminus \Omega$ has exactly three connected components.
According to \cite{AANS1}, we denote by $O_1$, $O_2$, and $O_3$ the components containing the points
$(1/6,1/6,1/6)$, $(7/15,7/15,7/15)$, and $(1/6, 1/4, 1/3)$ respectively. Note that $Q(a_1,a_2,a_3)<0$ for $(a_1,a_2,a_3) \in O_1 \cup O_2$ and
$Q(a_1,a_2,a_3)>0$ for $(a_1,a_2,a_3) \in O_3$.

It is shown in \cite{AANS1}, that the normalized Ricci flow for a generalized Wallach space with $(a_1,a_2,a_3)\in (0,1/2)^3\setminus \Omega$
has no degenerate singular point, as a planar dynamical system.
By Theorem 7 in \cite{AANS1}, for $(a_1,a_2,a_3)\in O_j$ the following possibilities for singular points of this system can occur:
\smallskip

{\bf i)} If $j=1$  then there is four singular point, one of them is an unstable node
and three other are saddles;

{\bf ii)} If $j=2$ then there is four singular point, one of them is a stable node
and three other are saddles;

{\bf iii)} If $j=3$ then there are two singular points, that are saddles.
\bigskip

Now we describe the location of points $(a_1,a_2,a_3)\in \mathbb{R}^3$ determined by generalized Wallach spaces from Theorem \ref{main}.
Recall that every such space determines not only one point $(a_1,a_2,a_3)$ but also the points that obtained with permutations of $a_1$, $a_2$, and $a_3$.
\smallskip

For the spaces $SU(k+l+m)/S\big(U(k)\times U(l) \times U(m)\big)$, $k \geq l\geq m \geq 1$, we have
$$
a_1=\frac{k}{2(k+l+m)}, \quad a_2=\frac{l}{2(k+l+m)}, \quad a_3=\frac{m}{2(k+l+m)},
$$
and $a_1+a_2+a_3=1/2$.
It is clear that all such points $(a_1,a_2,a_3)$ are in the component $O_1$.
Moreover, closure of the set of all such points coincides with the triangle in $\mathbb{R}^3$ with the
vertices $(0,0,1/2)$, $(0,1/2,0)$, and $(1/2,0,0)$. Indeed, the last assertion easily follows from considering of
the barycentric coordinates in this triangle.

For the spaces $Sp(k+l+m)/Sp(k)\times Sp(l) \times Sp(m)$, $k \geq l\geq m \geq 1$, we get
$$
a_1=\frac{k}{2(k+l+m+1)}, \quad a_2=\frac{l}{2(k+l+m+1)}, \quad a_3=\frac{m}{2(k+l+m+1)},
$$
and $a_1+a_2+a_3<1/2$. Hence, all such point are also in the component $O_1$.

The case $SO(k+l+m)/SO(k)\times SO(l) \times SO(m)$, $k \geq l\geq m \geq 1$, is more interesting. We have
$$
a_1=\frac{k}{2(k+l+m-2)}, \quad a_2=\frac{l}{2(k+l+m-2)}, \quad a_3=\frac{m}{2(k+l+m-2)}.
$$

For $l=m=1$ we get
$a_1=1/2$ and $a_2=a_3=\frac{1}{2k}$.
Hence, $(a_1,a_2,a_3) \not\in (0,1/2)^3$. Then we may assume  that $l\geq 2$ without loss of generality. Therefore, $k\geq l \geq 2$ and $k+l+m \geq 5$.

Note that $a_1+a_2+a_3 = \frac{k+l+m}{2(k+l+m-2)}=g(k+l+m)$, where $g(x)=\frac{x}{2(x-2)}$. Since the function $x \mapsto \frac{x}{2(x-2)}$
decreases for $x>2$, then we get that the inequality $a_1+a_2+a_3 \leq 3/4=g(6)$ holds for all $k,l,m$ with $k+m+l \geq 6$.

For $k+m+l \leq 5$ we should check only the space $SO(5)/SO(2)\times SO(2)\times SO(1)$ with $a_1=a_2=1/3$ and $a_3=1/6$.
It is easy to see that the point $(1/3,1/3,1/6)$ is in $O_3$, because for all points in $O_2$ we have the inequality
$a_i \geq 1/4$, $i=1,2,3$.

If  $k+m+l =6$, then $a_1+a_2+a_3 =3/4$.
Recall that the plane $a_1+a_2+a_3 =3/4$ intersects the surface $\Omega \cap (0,1/2)^3$ exactly in the point $(1/4,1/4,1/4)$ corresponded
to the space $SO(6)/SO(2)^3$. All other points of this plane in the cube $(0,1/2)^3$ are situated in the component $O_3$.
This is the case for $(a_1,a_2,a_3)=(3/8,1/4,1/8)$ corresponded to the space $SO(6)/SO(3)\times SO(2)\times SO(1)$.

For $k+m+l \geq 7$ we get $a_1+a_2+a_3 \leq 7/10=g(7)<3/4$, and such points are either in $O_1\cup O_3$ or in $\Omega$.
It is easy to see that there are infinitely many points of this type in $O_1$.
In order to find all triples in $O_1$ one should solve an inequality $F(k,l,m)<0$ for natural $k,l,m$, where $F$ is a polynomial of degree $12$.
We will not deal with this special problem here.

In any case, for the spaces $SO(k+l+m)/SO(k)\times SO(l) \times SO(m)$, there is no point $(a_1,a_2,a_3)$ in the component $O_2$.
\smallskip

Now, we determine the corresponding component $O_i$ for all other generalized Wallach spaces.
For this goal we may use all the ideas as above and one more simple observation: For $a_1=a_2=a_3=:a$, the point $(a_1,a_2,a_3)$ is in $O_1$
(respectively, $O_2$), if
$a<1/4$ (respectively, $a>1/4$).
\smallskip

Simple calculations show, that the spaces from {\bf lines 4, 7, 9}, and {\bf 15} of Table 1 are such that $(a_1,a_2,a_3)\in O_1$.
\smallskip

Further, the spaces from {\bf lines 5, 6, 8, 10, 12}, and {\bf 14} of Table 1 are such that $(a_1,a_2,a_3)\in O_3$.
Due to the first of these examples (where we have $1$\,-parameter family of spaces), we conclude that there are infinitely many points
$(a_1,a_2,a_3)$ corresponded to generalized Wallach spaces in $O_3$.
\medskip

The spaces from the lines {\bf 11} and {\bf 13} of Table 1 satisfy the condition  $(a_1,a_2,a_3)\in O_2$.
It is interesting that there are only two generalized Wallach spaces with this property.
These spaces give an affirmative answer to the question of Christoph B\"{o}hm
on the existence of specific examples of generalized Wallach spaces with the property $(a_1,a_2,a_3)\in O_2$.
\medskip

Note also that for a symmetric space $G/H$ that is a product of three irreducible symmetric space, we have $A=0$ and $(a_1,a_2,a_3)=(0,0,0)$.
Finally, for all spaces $(F \times F\times F \times F)/\diag(F)$, the equality $a_1=a_2=a_3=1/4$ holds, as well as for the space $SO(6)/SO(2)^3$.
Recall that the point $(1/4,1/4,1/4)$ is an an elliptic umbilic on the surface $\Omega$.
\medskip

\begin{remark}
When this paper was completed, the author saw the very recent preprint  \cite{CKL},
where {\bf(}in~particular{\bf)} the classification of generalized Wallach spaces $G/H$ with simple $G$
was obtained.
\end{remark}
\vspace{10mm}

\bibliographystyle{amsunsrt}

\vspace{5mm}

\end{document}